 
\documentclass[10pt]{amsart}
\usepackage[table]{xcolor}
\usepackage{amsmath,amsthm,amssymb,mathtools, scrextend,enumerate,bbm,tikz-cd,verbatim,cases,thmtools}
\usepackage{mathrsfs}
\usepackage{tikz-cd}
\usepackage[utf8]{inputenc}
\PassOptionsToPackage{hyphens}{url}\usepackage{hyperref}\PassOptionsToPackage{hyphens}{url}\usepackage{hyperref}

\usepackage{fancyhdr}
\allowdisplaybreaks

\newtheorem{thm}{Theorem}[section]
\newtheorem{lem}[thm]{Lemma}
\newtheorem{prop}[thm]{Proposition}

\theoremstyle{definition}

\newtheorem{rem}[thm]{Remark}

\newtheorem*{defn*}{Definition}
\newtheorem*{thm*}{Theorem}
\newtheorem*{prop*}{Proposition}
\newtheorem*{cor*}{Corollary}

\newtheorem*{question*}{Question}


\newcommand{\C}{\mathbb{C}}
\newcommand{\R}{\mathbb{R}}
\renewcommand{\Re}{\operatorname{Re}}

\newcommand{\h}{\mathbb{H}}
\renewcommand{\O}{\mathbb{O}}

\newcommand{\Spin}{\operatorname{Spin}}
\newcommand{\Proj}{\operatorname{Proj}}

\newcommand{\Tr}{\operatorname{Tr}}

\newcommand{\Id}{\operatorname{Id}}

\newcommand{\Span}{\operatorname{Span}}

\newcommand{\diag}{\operatorname{diag}}

\renewcommand{\ker}{\operatorname{ker}}

\begin{document}
 

\title{Stable submanifolds in the product of projective spaces II}

\author{Shuli Chen And Alejandra Ramirez-Luna}
\date{\today}
 \maketitle

\begin{abstract} We prove that there do not exist odd-dimensional stable compact minimal immersions in the product of two complex projective spaces. We also prove that the only stable compact minimal immersions in the product of a quaternionic projective space with any other Riemannian manifold are the products of quaternionic projective subspaces with  compact stable minimal immersions of the second manifold in the Riemmanian product. These generalize similar results of the second-named  author of immersions with low dimensions or codimensions to immersions with arbitrary dimensions. In addition, we prove that the only stable compact minimal immersions in the product of a octonionic projective plane with any other Riemannian manifold are the products of octonionic projective subspaces with compact stable minimal immersions of the second manifold in the Riemmanian product.
\end{abstract}
\section{Introduction}
A classical and fascinating problem in Riemannian geometry is the study of submanifolds that minimize area under perturbations in a given Riemannian manifold. This gives rise to the the study of stable minimal submanifolds, which are critical points of the area functional and whose second variation of the area functional is non-negative (equivalently, the mean curvature vector and Morse index both equal to zero \cite{simons1968minimal}).
In particular, it is an interesting problem to obtain geometric information of the submanifold by just knowing that it is stable.\\

A lot of research has been done in that direction where the ambient manifold is well-known. For example, Fischer-Colbrie and Schoen \cite{fischer1980structure}, do Carmo and Peng \cite{do1979stable}, and Pogorelov \cite{pogorelov1981stability} independently proved that planes are the only stable complete minimal surfaces in the $3$-dimensional Euclidean space, and recently Chodosh and Li proved that a complete, two-sided, stable minimal hypersurface in the  $4$-dimensional Euclidean space must be flat \cite{chodosh2021stable}. When the ambient manifold is compact, Simons proved that the are no compact stable minimal submanifolds in the Euclidean sphere \cite{simons1968minimal}. Later, Lawson and Simons characterized the complex submanifolds (in the sense that each tangent space is invariant under the complex structure) as the only compact stable minimal submanifolds in the complex projective space \cite{lawson1973stable}. Finally, Ohnita completed the classification of compact stable minimal submanifolds in all compact rank one symmetric spaces by proving that the only stable submanifolds in the real and quaternionic projective space,  and the Cayley plane, are the real and quaternionic projective subspaces, and the Cayley projective line, respectively \cite{ohnita1986stable}.\\

In the case where the ambient manifold is a Riemannian product of well-known Riemannian manifolds we have the following: Torralbo and Urbano proved a characterization of stable submanifolds in the product of the Euclidean sphere with any other Riemannian manifold \cite{torralbo2014stable}, and Chen and Wang proved a similar characterization of stable submanifolds in the product of any hypersurface of the Euclidean space with certain conditions and any Riemannian manifold \cite{chen2013stable}.\\

Along these lines and following similar ideas of Torralbo, Urbano, Chen, and Wang, Ramirez-Luna in \cite{Ramirez3} proved a characterization theorem for stable submanifolds of specific dimensions in the product of a complex and quaternionic projective space with any other Riemannian manifold. In particular we recall the following results: 
\begin{thm}\cite{Ramirez3}
\label{M3}
The only compact stable minimal immersions of codimension $d=2$ or dimension $n=2$ in the product manifold $\bar{M}:=\mathbb{C}P^{\frac{m_{1}}{2}}\times \mathbb{C}P^{\frac{m_{2}}{2}}$ are the complex ones, in the sense that each tangent space is invariant under the complex structure $J_{1}$ or $J_{2}$ of $\bar{M}$ (see Definition 3.7 in \cite{Ramirez3}). 
\end{thm}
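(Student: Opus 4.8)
The plan is to adapt the Lawson--Simons averaging method \cite{lawson1973stable} (in the product form used by Torralbo and Urbano \cite{torralbo2014stable}) to the K\"ahler product $\bar M=\mathbb{C}P^{m_1/2}\times\mathbb{C}P^{m_2/2}$. Let $\phi\colon M^n\to\bar M$ be a compact minimal immersion with local orthonormal tangent frame $\{e_i\}_{i=1}^n$. Stability means that the index form
\[
Q(V,V)=\int_M\Big(|\nabla^\perp V|^2-\sum_{i=1}^n\langle\bar R(e_i,V)V,e_i\rangle-\sum_{i=1}^n|A_V e_i|^2\Big)\,dM
\]
is nonnegative for every normal field $V$, where $\bar R$ is the curvature of $\bar M$ and $A_V$ is the shape operator. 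The idea is not to test against one $V$, but to feed in a whole family of fields built from the ambient symmetries, sum the resulting inequalities, and extract a pointwise inequality whose equality case forces each $T_pM$ to be invariant under $J_1$ or $J_2$.

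To build the family I would use the first-eigenfunction (moment-map) embedding of each factor. Writing $k_a=m_a/2$, the map $[z]\mapsto zz^{*}/|z|^{2}-\tfrac{1}{k_a+1}\Id$ embeds $\mathbb{C}P^{k_a}$ minimally into a sphere of traceless Hermitian matrices; for each traceless Hermitian $B$ the component $f_B$ is a first eigenfunction, and $\nabla f_B$ together with its Hamiltonian partner $J_a\nabla f_B$ (which is Killing) are the natural vector fields of the factor. Pulling these back through the two product projections gives a finite family $\{W_\alpha\}$ of ambient fields, and I would take the normal projections $W_\alpha^\perp$ as test fields. The crucial structural inputs are the transitivity identity $\sum_\alpha\nabla f_\alpha\otimes\nabla f_\alpha=c\,g_a$ on each factor (the metric being the only invariant symmetric $2$-tensor on a rank-one space) and the K\"ahler relation tying $\operatorname{Hess} f_B$ to $g_a$ and $J_a$.

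Summing $Q(W_\alpha^\perp,W_\alpha^\perp)$ over $\alpha$, the Hessian identities let the terms $|\nabla^\perp W_\alpha^\perp|^2$ and $|A_{W_\alpha^\perp}e_i|^2$ recombine with the explicit product curvature $\bar R=\bar R_1\oplus\bar R_2$, where
\[
\bar R_a(X,Y)Z=\langle Y,Z\rangle X-\langle X,Z\rangle Y+\langle J_aY,Z\rangle J_aX-\langle J_aX,Z\rangle J_aY+2\langle X,J_aY\rangle J_aZ,
\]
so that the whole sum collapses to a pointwise expression. For each factor this expression is, up to a positive constant, a nonpositive multiple of the defect measuring how far $T_pM$ is from being $J_a$-invariant (controlled by the quantities $|(J_ae_i)^\perp|^2$). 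Hence $\sum_\alpha Q(W_\alpha^\perp,W_\alpha^\perp)\le 0$, and stability forces this to vanish; the equality analysis then says that at every point the defect vanishes for $a=1$ or $a=2$, which is exactly the complex condition of Definition~3.7 in \cite{Ramirez3}.

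The main obstacle, and the place where the hypotheses $n=2$ or $d=2$ are indispensable, is controlling the sign of the recombined integrand: the two factor-defects compete, and the summation identity produces a clean nonpositive total only once one restricts the ranges of the tangential and normal indices. With $d=2$ the normal bundle is small enough that the cross terms between the factors can be bounded and the shape-operator contribution absorbed, while with $n=2$ the tangent space is small enough that the two defect tensors cannot both stay positive. For general $n$ and $d$ neither bound is available and the total sign is genuinely undetermined, which is precisely the gap the present paper must close by other means. I would therefore expect the heart of the argument to be the index bookkeeping in these two boundary cases, followed by the verification that the equality locus forces $J_1$- or $J_2$-invariance pointwise (a continuity argument then making the choice consistent on each connected component).
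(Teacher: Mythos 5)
Your test family is, in substance, the same one the paper uses: the moment-map embedding $[z]\mapsto zz^{*}/|z|^{2}-\tfrac{1}{k_a+1}\Id$ is precisely the generalized Veronese isotropic embedding $f_1$ of \cite{sakamoto1977planar} into the traceless Hermitian matrices, and taking normal projections of the gradients $\nabla f_B$ amounts to taking the sections $N_{E_A}$ of Lemma \ref{GENERALC} (your Killing partners $J_a\nabla f_B$ contribute nothing, since variations by ambient isometries have vanishing second variation). So the skeleton agrees with \cite{Ramirez3} and with the Chen--Wang/Torralbo--Urbano scheme. The first genuine gap is that the collapsed pointwise quantity is not, as you assert, a manifestly ``nonpositive multiple of the defect'': by Lemma \ref{GENERALC} the sum equals $\lambda_1^{2}\sum_{i,j}\bigl(\langle J_1(e_j^1),e_i^1\rangle^{2}-\langle e_j^1,e_i^1\rangle^{2}\bigr)$, and its non-positivity is exactly the nontrivial ``vector inequality'' that constitutes the heart of the proof. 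You state it as an automatic consequence of the recombination; in fact in \cite{Ramirez3} it is proved by hand for $n\le 2$ (and for $d\le 2$ via the dual expression in the normal frame $\{\eta_\beta\}$), and in the present paper in full generality as Proposition \ref{ineq: key result}. Note also that there is no issue of ``bounding cross terms between the factors'': the isotropy of the embedding makes the sum collapse exactly, with no cross terms to absorb.

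The second gap is that your account inverts where the hypotheses $n=2$ or $d=2$ enter. They are \emph{not} needed to control the sign: Proposition \ref{ineq: key result} shows the summed second variation is $\le 0$ for every $n$ and $d$, so your claim that ``for general $n$ and $d$ the total sign is genuinely undetermined'' is false. The restriction is needed in the equality analysis, which you compress into one sentence and get wrong. Equality forces \emph{both} (\ref{eqn: complex, =0, J_1}) and (\ref{eqn: complex, =0, J_2}) simultaneously (not a defect vanishing ``for $a=1$ or $a=2$''), and by the equality case of Proposition \ref{ineq: key result} this yields only that the projected spans $\Span\{e_i^1\}$ and $\Span\{e_i^2\}$ are $J_1$-, respectively $J_2$-invariant. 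In general dimension this does \emph{not} imply that the tangent space of $\Sigma$ is invariant under either product complex structure of $\bar{M}$ --- Remark \ref{cc:even} points out precisely this insufficiency, and in general all one can extract is the parity of eigenvalue multiplicities of $X_1^TX_1$, whence Theorem \ref{main1}. It is only when $n=2$ (or dually $d=2$) that invariance of the projections can be upgraded to $J_1$- or $J_2$-invariance of $T_p\Sigma$ itself, and that upgrade --- not ``index bookkeeping'' in the sign estimate --- is the step your proposal would still have to supply to prove the stated theorem.
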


\begin{thm} \cite{Ramirez3}
\label{QUATERNIONICTHEOREM}
Let $\Phi=(\psi,\phi):\Sigma \rightarrow \mathbb{H}P^{\frac{m_{1}}{4}}\times M$ be a compact stable minimal immersion of codimension $d$ and dimension $n$, where $M$ is any Riemannian manifold of dimension $m_{2}$. Then, 

\begin{itemize}
    \item If $d=1$,
     $\Sigma=\mathbb{H}P^{\frac{m_{1}}{4}}\times \hat{\Sigma}$, $\Phi =  \Id \times \hat{\phi}$ where $\hat{\phi}:\hat{\Sigma}\rightarrow M$ is a compact stable minimal  immersion of codimension $1$,  and therefore $\Phi(\Sigma)=\mathbb{H}P^{\frac{m_{1}}{4}}\times \hat{\phi}(\hat{\Sigma})$.
     In particular, for $m_{2}=1$, $\Sigma=\mathbb{H}P^{\frac{m_{1}}{4}}$, $\hat{\phi}$ is a constant function, and $\Phi(\Sigma)=\mathbb{H}P^{\frac{m_{1}}{4}}\times \{q\}$, for $q\in M$.
\item If $d=2$,
     $\Sigma=\mathbb{H}P^{\frac{m_{1}}{4}}\times \hat{\Sigma}$, $\Phi= \Id\times \hat{\phi}$ where $\hat{\phi}:\hat{\Sigma}\rightarrow M$ is a compact stable minimal  immersion of codimension $2$,  and therefore $\Phi(\Sigma)=\mathbb{H}P^{\frac{m_{1}}{4}}\times \hat{\phi}(\hat{\Sigma})$.
     In particular, for $m_{2}=1$, there are no compact stable minimal immersions of codimension $2$ in $\mathbb{H}P^{\frac{m_{1}}{4}}\times M$. And for $m_{2}=2$, $\Sigma=\mathbb{H}P^{\frac{m_{1}}{4}}$, $\hat{\phi}$ is a constant function, and $\Phi(\Sigma)=\mathbb{H}P^{\frac{m_{1}}{4}}\times \{q\}$, for $q\in M$.
    \item If $n=1$, $\phi:\Sigma \rightarrow M$ is a  stable geodesic, $\psi$ is a constant function,  and therefore $\Phi(\Sigma)=\{r\}\times \phi(\Sigma)$ with $r$ a point of $\mathbb{H}P^{\frac{m_{1}}{4}}$.
    
    \item If $n=2$, $\phi:\Sigma \rightarrow M$ is a stable minimal immersion of dimension $2$, $\psi$ is a constant function, and therefore $\Phi(\Sigma)=\{r\}\times \phi(\Sigma)$ with $r$ a point of $\mathbb{H}P^{\frac{m_{1}}{4}}$.
\end{itemize}
\end{thm}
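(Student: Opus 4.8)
The plan is to adapt the averaging technique of Lawson--Simons and Ohnita---as applied to products by Torralbo--Urbano---using test sections built from the quaternionic K\"ahler structure of $\mathbb{H}P^{\frac{m_{1}}{4}}$. Write $\bar{M}=\mathbb{H}P^{\frac{m_{1}}{4}}\times M$, let $\bar{R}$ be its curvature tensor, and recall that for a Riemannian product the curvature has no mixed terms, so $\bar{R}$ splits according to the two factors. For a normal section $V$ along the minimal immersion $\Phi$, the second variation (index) form reads
\[ Q(V)=\int_{\Sigma}\Big(|\nabla^{\perp}V|^{2}-\sum_{i=1}^{n}\langle\bar{R}(V,e_{i})e_{i},V\rangle-|A_{V}|^{2}\Big)\,d\mu, \]
where $\{e_{i}\}$ is an orthonormal frame of $T\Sigma$ and $A_{V}$ is the shape operator in the direction $V$; stability means $Q(V)\ge 0$ for every normal $V$, and minimality kills the trace of the second fundamental form.

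First I would realize $\mathbb{H}P^{\frac{m_{1}}{4}}$ as a minimal submanifold of a sphere in the Euclidean space of trace-free quaternionic Hermitian matrices via $v\mathbb{H}\mapsto vv^{*}$, and for each constant matrix $B$ take the gradient field $X_{B}$ of the height function $\langle f,B\rangle$, extended to $\bar{M}$ as $(X_{B},0)$. The test variations are the normal projections $V_{B}=(X_{B},0)^{\perp}$ along $\Sigma$. The key point is that, as $B$ ranges over an orthonormal basis of the matrix space, the $X_{B}$ form a tight frame for $T\mathbb{H}P^{\frac{m_{1}}{4}}$ (by Schur's lemma the invariant tensor $\sum_{B}X_{B}\otimes X_{B}$ is proportional to the metric), and their ambient covariant derivatives are controlled by the embedding being of ``$1$-type'': $\bar{\nabla}X_{B}$ equals a term proportional to the identity plus a quaternionic correction built from the local frame $J_{1},J_{2},J_{3}$ of the quaternionic bundle. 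Summing $Q(V_{B})$ over the basis and combining these completeness relations with the explicit sectional curvatures of $\mathbb{H}P^{\frac{m_{1}}{4}}$ (with the standard normalization, equal to $4$ on quaternionic $2$-planes and $1$ otherwise) should collapse the averaged index form into a pointwise expression in $|A|^{2}$ and the quaternionic angles between $T\Sigma$ and the $J_{\alpha}$, and this expression is manifestly non-positive.

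Then stability forces $\sum_{B}Q(V_{B})=0$, so every term in the averaged pointwise inequality vanishes. I expect this equality analysis to be the heart of the argument: it should simultaneously force the immersion to split as a Riemannian product compatibly with $\bar{M}$, and force the $\mathbb{H}P$-part of each tangent space $T\Sigma$ to be invariant under the quaternionic structure, i.e. to be the tangent space of a quaternionic projective subspace. The main obstacle is running this equality case cleanly while the structures $J_{\alpha}$ are only locally defined---only the $SO(3)$-invariant sum over $\alpha$ is globally meaningful---and then promoting the resulting infinitesimal splitting to a genuine global product decomposition $\Sigma=\mathbb{H}P^{\frac{m_{1}}{4}}\times\hat{\Sigma}$ with $\Phi=\mathrm{Id}\times\hat{\phi}$.

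Finally I would read off the listed cases by dimension counting, using that a nonzero quaternionic subspace of $T\mathbb{H}P^{\frac{m_{1}}{4}}$ has real dimension divisible by $4$. When $d=1$ or $d=2$, the normal bundle is too small to absorb any quaternionic direction, so $T\Sigma$ must contain the entire $\mathbb{H}P$ factor, yielding $\Sigma=\mathbb{H}P^{\frac{m_{1}}{4}}\times\hat{\Sigma}$ with $\hat{\phi}$ of codimension $d$ in $M$; the special subcases for $m_{2}=1,2$ then follow since a codimension-$d$ immersion into an $m_{2}$-dimensional $M$ forces $\hat{\phi}$ to be constant or to not exist. When $n=1$ or $n=2$, conversely, $T\Sigma$ is too small to contain a nonzero quaternionic subspace (which would need dimension at least $4$), so its $\mathbb{H}P$-part is zero, $\psi$ is constant, and $\Phi(\Sigma)=\{r\}\times\phi(\Sigma)$ with $\phi$ a stable geodesic ($n=1$) or a stable minimal surface ($n=2$) in $M$.
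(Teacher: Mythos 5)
Your outline follows the same strategy as the paper (and as the original proof in \cite{Ramirez3}): the isotropic/generalized Veronese embedding $f_1$ of $\mathbb{H}P^{m_1/4}$ (your $v\mapsto vv^*$ picture), normal projections of constant ambient vectors as test sections (your $V_B$ coincides with the paper's $N_{E_A}$, since the tangential part of a constant vector along $f_1$ is the gradient of the height function), averaging the second variation over an orthonormal basis, an equality analysis, a splitting, and the divisibility-by-$4$ dimension count, which does correctly recover all four bullets from the general splitting statement. The genuine gap is the step where you declare the collapsed pointwise expression ``manifestly non-positive.'' The averaged form equals $\lambda^{2}\big(-\sum_{k\neq s}\sum_{j,\beta}\langle J_{k}(\eta_{\beta}^{1}),e_{j}^{1}\rangle^{2}+\sum_{i,j}(\langle J_{s}(e_{i}^{1}),e_{j}^{1}\rangle^{2}-\langle e_{i}^{1},e_{j}^{1}\rangle^{2})\big)$ (Lemma \ref{GENERALH}, quoted from \cite{Ramirez3}), and the sign of $\sum_{i,j}\langle J_{s}(e_{i}^{1}),e_{j}^{1}\rangle^{2}-\langle e_{i}^{1},e_{j}^{1}\rangle^{2}$ is exactly the ``vector inequality'' that is the technical heart of the whole program: the $e_{j}^{1}$ are projections of an orthonormal frame, neither orthonormal nor of unit length, so skewness of $J_{s}$ kills only the diagonal terms and nothing ``manifest'' controls the off-diagonal ones. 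In \cite{Ramirez3} this inequality is verified by direct computation precisely in the cases $n\in\{1,2\}$ or $d\in\{1,2\}$ covered by the statement; in general dimension it is Proposition \ref{ineq: key result} of the present paper, proved via Cauchy--Schwarz for the trace inner product, $\sum_{i,j}\langle Ax_{i},x_{j}\rangle^{2}=\langle XX^{T},A^{T}XX^{T}A\rangle\le\|XX^{T}\|^{2}$ for $A$ orthogonal, with equality iff $XX^{T}$ commutes with $A=J_{s}$. Without some such argument your proposal does not close.

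The local-to-global issue you flag yourself also needs a concrete tool rather than an expectation: once stability forces equality, one obtains $\langle J_{k}(\eta_{\beta}^{1}),e_{j}^{1}\rangle=0$ together with $J_{t}$-invariance of $\Span\{e_{l}^{1}\}$, hence $\langle\eta_{\beta}^{1},e_{j}^{1}\rangle=0$ and therefore $\langle\eta_{\beta}^{2},e_{j}^{2}\rangle=0$; this makes $d\Phi(T\Sigma)$ invariant under the product structure $F=P-Q$, and Theorem 1 of \cite{xu2000submanifolds} then yields the global decomposition $\Sigma=\Sigma_{1}\times\Sigma_{2}$, $\Phi=\psi_{1}\times\psi_{2}$ with each factor a stable compact minimal immersion. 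Beware also that pointwise $J$-invariance of the projected span does not by itself make $\Sigma_{1}$ a quaternionic projective subspace; one still invokes Ohnita's Theorem D \cite{ohnita1986stable}, which says the stable compact minimal submanifolds of $\mathbb{H}P^{n}$ are exactly the quaternionic projective subspaces, after which your dimension count (a quaternionic subspace has real dimension divisible by $4$, so $d\le 2$ forces $\Sigma_{1}=\mathbb{H}P^{m_{1}/4}$ and $n\le 2$ forces $\Sigma_{1}$ to be a point) gives the listed cases; the merely local definedness of $J_{1},J_{2},J_{3}$ is harmless since the whole analysis is pointwise.
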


Recall that the technique in these problems involving stable submanifolds, which goes back to Simons \cite{simons1968minimal}, is to find appropriate normal sections, such that when we add the second variations along them, we obtain a non-positive sign. This together with the stability of the submanifold, allows us in some cases to obtain geometric information about the submanifold. Therefore, an essential part of the proofs of Theorems  \ref{M3} and \ref{QUATERNIONICTHEOREM} is to prove that Equations (9) and (10) in Lemma 3.1 and (47) and (48) in Lemma 4.1 in \cite{Ramirez3} for the specific dimensions and codimensions have a non-positive sign (we call this the vector inequality). In \cite{Ramirez3}, the vector inequality is proved in dimension one and two.
In this paper we prove  that the same vector inequality holds in general dimensions (see Proposition \ref{ineq: key result}), and prove another inequality (see Proposition \ref{prop: octo ineq}). This allows us to prove the nonexistence of odd-dimensional stable minimal submanifolds in the product of two complex projective spaces, and obtain a complete characterization of stable submanifolds in the product of a quaternionic projective space with any other Riemannian manifold, and stable minimal submanifolds  in the product of a octonionic projective plane with any other Riemannian manifold.

From Theorem \ref{M3} (see also \cite{torralbo2014stable} and references in there), it is expected that stable minimal submanifolds in a Riemannian manifold with a complex structure behave well under the same complex structure. Towards this idea we prove that:

\begin{restatable}{thm}{mainone}
\label{main1}
There do not exist odd-dimensional stable compact minimal immersions in $\C P^{m_1/2} \times \C P^{m_2/2}$.
\end{restatable}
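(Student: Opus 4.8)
The plan is to use the Simons-type technique described in the paper: construct a family of normal variation vector fields, compute the sum of second variations along them, and show that for an odd-dimensional stable minimal submanifold this sum is strictly negative, contradicting stability unless the submanifold does not exist. The ambient space $\C P^{m_1/2} \times \C P^{m_2/2}$ carries two commuting complex structures $J_1, J_2$ coming from the two factors (extended to the product by acting trivially on the other factor). The key structural input is the vector inequality of Proposition \ref{ineq: key result}, which is the general-dimensional analogue of Equations (9) and (10) of Lemma 3.1 in \cite{Ramirez3}; I would invoke it to control the sign of the summed second variation.

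First I would set up the second variation formula for the area functional along a normal field $N$, which for a minimal immersion in a symmetric space decomposes into a term involving the ambient curvature and a term involving the second fundamental form. Following Lawson--Simons \cite{lawson1973stable} and Ohnita \cite{ohnita1986stable}, I would choose test vector fields adapted to the two complex structures: for a tangent vector $e$ one considers the normal components of $J_1 e$ and $J_2 e$, and sums the second variation over an orthonormal tangent frame $\{e_i\}_{i=1}^{n}$. The curvature of a complex projective space is explicitly known (it is a space of holomorphic sectional curvature constant), so the ambient-curvature contribution can be written in terms of the angles between the $J_\alpha e_i$ and the tangent/normal spaces. The sum of the Simons-type quadratic forms then reduces, after using the Gauss and Codazzi equations and the parallelism of $J_1, J_2$, to the pointwise algebraic quantity controlled by Proposition \ref{ineq: key result}, which guarantees a non-positive (and, crucially, not identically zero in the odd-dimensional case) total.

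The decisive step is to exploit the parity. When $n = \dim \Sigma$ is odd, the tangent space cannot be simultaneously invariant under both $J_1$ and $J_2$ in a way that makes the summed variation vanish: invariance under a complex structure forces even dimension on the corresponding invariant subspace, so at least one of the projections of $J_1$ or $J_2$ onto the normal bundle must be nontrivial at each point. This means the inequality from Proposition \ref{ineq: key result} is strict somewhere, forcing the total second variation to be strictly negative. Hence no stable odd-dimensional minimal immersion can exist, which is precisely the conclusion. I would phrase this as: stability plus the vector inequality forces the tangent distribution to be $J_1$- and $J_2$-complex almost everywhere, and complex subspaces are even-dimensional, contradicting $n$ odd.

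The main obstacle I anticipate is verifying that the summed second variation is genuinely strictly negative rather than merely non-positive, i.e., ruling out the borderline equality case. The vector inequality gives the correct sign, but one must track exactly when equality holds in Proposition \ref{ineq: key result} and show that equality everywhere is incompatible with odd $n$; this requires a careful analysis of the equality case of the algebraic inequality together with the geometric constraint that the $J_\alpha$-invariant part of each tangent space has even dimension. A secondary technical point is the correct globalization: the variation fields are globally defined (built from the parallel structures $J_1, J_2$), so integrating the pointwise inequality over the compact $\Sigma$ is routine, but I would need to confirm that the integral of a non-positive, somewhere-negative integrand is strictly negative, which holds by compactness and continuity. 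Once these are in place, the contradiction with the hypothesis of stability completes the proof.
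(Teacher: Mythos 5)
Your decisive step has a genuine gap. Stability together with Proposition \ref{ineq: key result} does \emph{not} yield a somewhere-strict inequality when $n$ is odd; instead, stability forces the integrand to vanish identically (this is how the paper obtains Equations (\ref{eqn: complex, =0, J_1}) and (\ref{eqn: complex, =0, J_2})), and the equality case of Proposition \ref{ineq: key result} only gives that $\Span\{e_1^k,\dots,e_n^k\}$ --- the span of the \emph{projections} of the tangent frame onto the $k$-th factor --- is $J_k$-invariant, hence even-dimensional. That alone does not contradict odd $n$: the projections $e_i^k$ need not be linearly independent, so for $n=3$ one could a priori have $\dim\Span\{e_i^1\}=\dim\Span\{e_i^2\}=2$, consistent with $n\le 2+2$; your claim that ``at least one of the projections of $J_1$ or $J_2$ onto the normal bundle must be nontrivial at each point'' conflates invariance of these spans with invariance of the tangent space itself (compare Remark \ref{cc:even}, which warns against exactly this inference). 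The paper closes the gap with an eigenvalue-multiplicity argument absent from your sketch: writing $X_k$ for the matrix of the projections, orthonormality of the $e_i$ gives $X_1^TX_1+X_2^TX_2=I_n$, so eigenvalues pair as $\mu\leftrightarrow 1-\mu$ with common eigenvectors; the equality case gives $[X_kX_k^T,J_k]=0$, so the eigenspaces of $X_kX_k^T$ are $J_k$-invariant and even-dimensional, whence by Proposition \ref{linearalgebra} all \emph{nonzero} eigenvalues of $X_k^TX_k$ have even multiplicity; finally the pairing transfers evenness to the zero eigenvalue of $X_1^TX_1$ (its multiplicity equals that of the eigenvalue $1$ of $X_2^TX_2$, which is even), so every multiplicity is even and $n=\sum(\text{multiplicities})$ is even. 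Without this bookkeeping, your parity argument does not go through.

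A secondary issue is your choice of test sections. You propose Lawson--Simons/Ohnita-type fields (normal components of $J_1e$, $J_2e$), whereas the paper, following \cite{chen2013stable,torralbo2014stable,Ramirez3}, uses the normal projections $N_{E_A}$ of constant Euclidean vectors through the isotropic Veronese embedding; it is Lemma \ref{GENERALC} for these sections that reduces the summed second variation to precisely the algebraic quantity in Proposition \ref{ineq: key result}. Your sketch asserts but does not verify that your different fields produce the same reduction, and this is not automatic in the product setting (where each $J_\alpha$ acts trivially on the other factor). That verification would be a real task, but the fatal defect is the parity step above.
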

For even-dimensional stable compact minimal immersions in $\C P^{m_1/2} \times \C P^{m_2/2}$, even though we obtain a non-positive sign in the second variation, this is not enough to conclude that the tangent space of the stable submanifold behaves well under a complex structure of $\C P^{m_1/2} \times \C P^{m_2/2}$. \\

Theorem \ref{QUATERNIONICTHEOREM} shows that the stable minimal submanifolds of dimension or codimension $1$ or $2$ in $\mathbb{H}P^{\frac{m_{1}}{4}}\times M$ are precisely the product of stable minimal submanifolds in $\mathbb{H}P^{\frac{m_{1}}{4}}$ and $M$. In this paper, we generalize this result to arbitrary dimension:

 \begin{restatable}{thm}{maintwo}
\label{main2}
 Let $\Phi:\Sigma \to \h P^{m_1/4} \times M$ be a stable compact minimal immersion, where $M$ is a Riemannian manifold of dimension $m_{2}$. Then $\Sigma = \h P^{m_1'/4} \times \Sigma_2$, $\Phi = \psi_1 \times \psi_2$, where $\psi_1: \h P^{m_1'/4} \to \h P^{m_1/4}$, $\psi_2: \Sigma_2 \to M$ are stable compact minimal immersions.
\end{restatable}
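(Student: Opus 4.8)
The plan is to run, in arbitrary dimension, the Simons-type second-variation argument behind Theorem \ref{QUATERNIONICTHEOREM}, feeding it Proposition \ref{ineq: key result} in place of the low-dimensional computations of \cite{Ramirez3}. Write $\bar M=\h P^{m_1/4}\times M$ and $\Phi=(\psi,\phi)$, and let $Q$ denote the index form (second variation of area) of the compact minimal immersion $\Phi$. The isometry group of $\bar M$ contains $\mathrm{Sp}(m_1/4+1)$ acting on the first factor; I would take a basis $\{X_\alpha\}$ of the associated Killing fields on $\h P^{m_1/4}$, extend them by zero in the $M$-direction, restrict them to $\Sigma$, and project onto the normal bundle to obtain test sections $X_\alpha^\perp$. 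Stability gives $Q(X_\alpha^\perp,X_\alpha^\perp)\ge 0$ for each $\alpha$. The core computation, carried out as in Lemma 4.1 of \cite{Ramirez3} but without restricting $\dim\Sigma$, uses the curvature of $\h P^{m_1/4}$ and the algebra of the $\{X_\alpha\}$ to rewrite $\sum_\alpha Q(X_\alpha^\perp,X_\alpha^\perp)$ as the integral over $\Sigma$ of a pointwise quantity depending only on $T_p\Sigma$ and the quaternionic structure $(I,J,K)$.

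Proposition \ref{ineq: key result} asserts that this pointwise integrand is $\le 0$ for every tangent plane, so $\sum_\alpha Q(X_\alpha^\perp,X_\alpha^\perp)\le 0$. Together with the stability inequalities $Q(X_\alpha^\perp,X_\alpha^\perp)\ge 0$ this forces $\sum_\alpha Q(X_\alpha^\perp,X_\alpha^\perp)=0$, so every term vanishes and the integrand vanishes identically on $\Sigma$. In other words, equality holds pointwise in the vector inequality, and each $X_\alpha^\perp$ lies in the kernel of $Q$, i.e.\ is a Jacobi field.

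The main work, and the step I expect to be the principal obstacle, is to extract geometry from this equality. I would analyze the equality case of Proposition \ref{ineq: key result} to show that at each point the tangent space is adapted to the product, $T_p\Sigma=\mathcal D_1(p)\oplus\mathcal D_2(p)$ with $\mathcal D_1(p)\subset T_{\psi(p)}\h P^{m_1/4}$ and $\mathcal D_2(p)\subset T_{\phi(p)}M$ (equivalently $\mathcal D_2=\ker d\psi$ and $\mathcal D_1=\ker d\phi$), and moreover that $\mathcal D_1(p)$ is a quaternionic subspace, invariant under $I,J,K$. Beyond this pointwise statement, the vanishing of the second variations should be used to control the relevant second fundamental form and connection terms and thereby show that $\mathcal D_1,\mathcal D_2$ are smooth, mutually orthogonal, integrable, and parallel.

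Granting parallelism, the de Rham decomposition theorem splits $\Sigma$ isometrically as a product $\Sigma=\Sigma_1\times\Sigma_2$ tangent to $\mathcal D_1,\mathcal D_2$, and since $\mathcal D_2=\ker d\psi$ and $\mathcal D_1=\ker d\phi$ the map factors as $\Phi=\psi_1\times\psi_2$ with $\psi_1=\psi|_{\Sigma_1}\colon\Sigma_1\to\h P^{m_1/4}$ and $\psi_2=\phi|_{\Sigma_2}\colon\Sigma_2\to M$. The immersion $\psi_1$ has quaternionic tangent spaces, and since a quaternionic submanifold of a quaternionic Kähler manifold is automatically totally geodesic, $\psi_1(\Sigma_1)$ is a totally geodesic quaternionic projective subspace; by completeness and simple connectivity $\Sigma_1=\h P^{m_1'/4}$, consistent with Ohnita's classification of the stable minimal submanifolds of $\h P^{m_1/4}$ \cite{ohnita1986stable}. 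Finally $\psi_2$ is a compact minimal immersion, and it is stable because any normal variation of $\psi_2$ extends to a normal variation of the product immersion $\Phi$ with the same, hence nonnegative, index form. This yields the stated decomposition. The two genuinely delicate points are (a) pinning down the exact equality locus of the vector inequality and converting it into the orthogonal splitting together with quaternionic invariance, and (b) promoting this pointwise data to a global isometric product, which requires establishing integrability and parallelism of $\mathcal D_1,\mathcal D_2$ rather than assuming them.
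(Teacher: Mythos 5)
Your choice of test sections is the fatal flaw. You propose to use the normal projections $X_\alpha^\perp$ of Killing fields of $\mathrm{Sp}(m_1/4+1)$ (extended by zero to the $M$-factor). But $(X_\alpha,0)$ is then a Killing field of the product $\h P^{m_1/4}\times M$, so its flow moves $\Phi(\Sigma)$ through isometric copies and $|\Phi_t(\Sigma)|$ is constant in $t$; since by the second variation formula (Theorem \ref{SVF}) the second variation depends only on the normal component of the variation field, this forces $Q(X_\alpha^\perp,X_\alpha^\perp)=0$ for \emph{every} compact minimal immersion, stable or not --- normal parts of ambient Killing fields are Jacobi fields. Hence $\sum_\alpha Q(X_\alpha^\perp,X_\alpha^\perp)=0$ identically and stability contributes nothing; if your subsequent pointwise rewriting and equality analysis were valid, they would ``prove'' the splitting conclusion for all minimal submanifolds, contradicting e.g.\ the diagonal immersion $\h P^{m/4}\to\h P^{m/4}\times\h P^{m/4}$, which is minimal and unstable (Remark 3.10 of the paper) and is not adapted to the product. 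The sections that actually work, and the ones for which \cite[Lemma~4.1]{Ramirez3} (Lemma \ref{GENERALH} here) is stated, are different: one takes the isotropic generalized Veronese embedding $f_1:\h P^{m_1/4}\to\R^m$ of \cite{sakamoto1977planar}, with $\|B(X,X)\|^2\equiv\lambda^2$, and uses the normal projections $N_{E_A}$ of the \emph{constant} coordinate fields $E_A$ of $\R^m$; these are not Killing for the product metric, their second variations carry information, and the isotropy of the embedding is what makes $\sum_A-\langle N_{E_A},J_\Sigma(N_{E_A})\rangle$ computable (Lemma \ref{lem: 2FF computation}) with the nonpositive sign coming from Proposition \ref{ineq: key result}.

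Once the test sections are corrected, the rest of your skeleton is essentially the paper's route, and your two ``delicate points'' are resolved more cheaply than you anticipate. For (a): equality in Proposition \ref{ineq: key result} (applied for each $s=1,2,3$) gives that $\Span\{e_l^1\}$ is $J_s$-invariant, while the vanishing of the cross terms in Lemma \ref{GENERALH} gives $\langle J_k(\eta_\beta^1),e_j^1\rangle=0$; combining the two yields $\langle\eta_\beta^1,e_j^1\rangle=\langle J_t(\eta_\beta^1),J_t(e_j^1)\rangle=0$, hence also $\langle\eta_\beta^2,e_j^2\rangle=0$, which says exactly that the reflection $F=P-Q$ preserves $d\Phi(T_p\Sigma)$, i.e.\ $\Sigma$ is an invariant submanifold of the product --- this is your pointwise adaptedness. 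For (b): you do not need to establish integrability and parallelism by hand and invoke de Rham; the paper cites Theorem 1 of \cite{xu2000submanifolds}, which states that an invariant submanifold of a Riemannian product splits as $\Sigma_1\times\Sigma_2$ with $\Phi=\psi_1\times\psi_2$. Finally, the identification $\Sigma_1=\h P^{m_1'/4}$ does not need your ``quaternionic implies totally geodesic'' step: $\psi_1$ inherits stability (as you correctly argue for $\psi_2$), and Ohnita's Theorem D \cite{ohnita1986stable} classifies stable minimal submanifolds of $\h P^{m_1/4}$ as the quaternionic projective subspaces.
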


In this paper, we further consider the stable compact minimal immersions of arbitrary dimension in $\mathbb{O}P^{2}\times M$. The linear algebra in this case is more involved due to non-associativity of octonions (see Proposition \ref{prop: octo ineq}). Analogous to Theorem \ref{main2}, we obtain the following result:
 \begin{restatable}{thm}{mainthree}
\label{main3}
 Let $\Phi:\Sigma \to \O P^{2} \times M$ be a stable compact minimal immersion, where $M$ is a Riemannian manifold of dimension $m_{2}$. Then $\Sigma = \Sigma_1 \times \Sigma_2$, $\Phi = \psi_1 \times \psi_2$, where $\Sigma_1$ is either a point, $\O P^{1}$ or $\O P^{2}$, and $\psi_1: \Sigma_1 \to \O P^{2}$, $\psi_2: \Sigma_2 \to M$ are stable compact minimal immersions.
\end{restatable}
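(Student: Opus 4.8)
The plan is to run the same variational scheme as in the proof of Theorem~\ref{main2}, with the quaternionic vector inequality (Proposition~\ref{ineq: key result}) replaced by its octonionic counterpart (Proposition~\ref{prop: octo ineq}). Write $\bar M = \O P^2 \times M$ and $\Phi = (\psi,\phi)$, so that at each $p \in \Sigma$ one has $T_p\Sigma \subset T_{\psi(p)}\O P^2 \oplus T_{\phi(p)}M$. Since $\Phi$ is a stable minimal immersion, the second variation $Q(V,V) = \int_\Sigma \big( |\nabla^\perp V|^2 - |A^V|^2 - \sum_i \langle \bar R(e_i,V)V, e_i\rangle \big)\, dA$ is nonnegative for every normal field $V$. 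First I would construct, at each point, a finite family of test normal fields obtained by projecting the infinitesimal octonionic rotations of the $\O P^2$ factor onto the normal bundle of $\Sigma$, in exact analogy with the way the structures $I,J,K$ enter the quaternionic argument. Homogeneity of $\O P^2$ under $F_4$ ensures these generating fields are globally defined with controlled covariant derivatives, so each contributes a valid test variation.

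Next I would sum $Q(V,V)$ over this family. Using minimality together with the Gauss and Codazzi equations, the gradient terms $|\nabla^\perp V|^2$ and the shape-operator terms $|A^V|^2$ recombine into a single pointwise expression involving only the curvature of $\O P^2$ evaluated on the tangential and normal projections of an adapted frame. This pointwise expression is precisely the quantity estimated in Proposition~\ref{prop: octo ineq}, which shows it is $\le 0$ everywhere on $\Sigma$. Since each $Q(V,V)\ge 0$ by stability, the summed second variation is simultaneously $\ge 0$ and $\le 0$, hence vanishes; therefore equality holds in Proposition~\ref{prop: octo ineq} at every point of $\Sigma$.

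The decisive step is the equality analysis, and this is where the main difficulty lies. I expect the equality case of Proposition~\ref{prop: octo ineq} to force, at each $p$, that the orthogonal projection of $T_p\Sigma$ onto $T_{\psi(p)}\O P^2$ is invariant under the octonionic structure and that $T_p\Sigma$ splits orthogonally as this octonionic piece together with a complementary piece mapping into $T_{\phi(p)}M$. Because $\O P^2$ has octonionic dimension $2$, the only octonionic-invariant subspaces of $T_x\O P^2$ are $\{0\}$, an octonionic line ($\cong T_x\O P^1$), and all of $T_x\O P^2$; this is the source of the three cases in the statement. The obstacle is that, unlike the quaternions, the octonionic structure tensors do not generate an associative operator algebra, so one cannot diagonalize or track invariant subspaces by a clean matrix computation. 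Here I would use alternativity and the Moufang identities of $\O$, equivalently the $G_2$-symmetry and the Jordan-algebra model $H_3(\O)$ of the Cayley plane, to carry out the classification of the subspaces achieving equality.

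Finally I would globalize the pointwise statement. The octonionic and $M$-directions single out two smooth orthogonal distributions on $\Sigma$, and the equality conditions force the mixed components of the second fundamental form to vanish, so by a de Rham-type splitting both distributions are integrable with totally geodesic leaves. The leaves tangent to the octonionic distribution are totally geodesic submanifolds of $\O P^2$ whose tangent spaces are octonionic, hence exactly a point, an $\O P^1$, or $\O P^2$. This produces the product structure $\Sigma = \Sigma_1 \times \Sigma_2$ with $\Phi = \psi_1 \times \psi_2$ and $\Sigma_1 \in \{\text{point},\,\O P^1,\,\O P^2\}$; the stability of each factor follows by restricting the stability inequality to variations supported in a single factor, which completes the proof.
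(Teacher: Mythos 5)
Your overall scheme---summing second variations of a canonical family of normal sections, invoking Proposition \ref{prop: octo ineq}, and using stability to force a pointwise quantity to vanish---matches the paper's strategy, but there are genuine gaps at the decisive steps. First, the test fields and the resulting pointwise formula: the paper does not project ``infinitesimal octonionic rotations'' (Killing fields of $F_4$); it projects the constant vectors $E_A$ of $\R^m$ under the isotropic Veronese embedding $f_1:\O P^2 \to \R^m$, whose property $\|B(X,X)\|^2=\lambda^2$ is what makes the sum computable via Lemma \ref{lem: 2FF computation}. Summing second variations over Killing fields of a compact symmetric space is not nonpositive in general (the totally geodesic $\O P^1\subset\O P^2$ is stable), so your family would need separate justification. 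More importantly, the resulting expression (Lemma \ref{GENERALO}) is \emph{not} ``precisely the quantity estimated in Proposition \ref{prop: octo ineq}'': it contains the additional nonpositive cross term $-6\lambda^2\sum_{j,k}\langle e_j^1,\eta_k^1\rangle^2$. The paper's splitting comes entirely from the vanishing of this extra term: $\langle e_j^1,\eta_k^1\rangle=0$ forces $\langle e_j^2,\eta_k^2\rangle=0$, hence $d\Phi(T_p\Sigma)$ is invariant under the reflection $F=P-Q$, and Theorem 1 of \cite{xu2000submanifolds} then gives the product splitting $\Sigma=\Sigma_1\times\Sigma_2$. Your plan instead hinges on an equality analysis of Proposition \ref{prop: octo ineq}---but no equality case is stated or proved there (its proof is a Lagrange-multiplier maximization, not a rigidity argument), and equality in that inequality alone does not yield the orthogonality relations $\langle e_j^1,\eta_k^1\rangle=0$ that the splitting requires. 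You concede this octonionic equality analysis is the hard point and only name tools ($G_2$-symmetry, Moufang identities, $H_3(\O)$) without carrying it out; the paper's proof is engineered to avoid it entirely.

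Second, the trichotomy for $\Sigma_1$: you assert that the leaves of the octonionic distribution are totally geodesic in $\O P^2$ with octonionic-invariant tangent spaces, but nothing in your argument delivers this---the splitting only makes $\psi_1:\Sigma_1\to\O P^2$ a stable compact minimal immersion, not a totally geodesic one. The paper closes the argument by citing Ohnita's Theorem E \cite{ohnita1986stable}, which classifies the nontrivial stable minimal immersed submanifolds of the Cayley plane as exactly the octonionic projective lines $\O P^1\cong S^8$; that citation, not pointwise linear algebra, is the source of $\Sigma_1\in\{\mathrm{point},\,\O P^1,\,\O P^2\}$. Your final step (stability of each factor by restricting variations) is fine, but without Ohnita's classification---or, alternatively, a full proof of your claimed equality rigidity \emph{plus} a proof that stability forces total geodesy of the leaves---the stated conclusion does not follow.
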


This paper is structured as follows: In Section \ref{vectorinequality} we prove the vector inequality and necessary linear algebra facts for the following section. In Section \ref{geometricapplications}, we give some preliminaries, and prove Theorems \ref{main1}, \ref{main2}, and \ref{main3}.\\

 \textbf{Acknowledgments.}
  The authors are grateful for the patience and useful discussions and suggestions of Otis Chodosh. S. C. is also grateful for useful discussions with Lie Qian. S. C. is partially sponsored by the Mary V. Sunseri Graduate Fellowship at Stanford University.
  
\section{The Vector Inequality}
\label{vectorinequality}

First we prove some basic results in linear algebra that will be needed later.

\begin{prop}\label{linearalgebra}
Let $X$ be an $m\times n$ matrix. Then,

\begin{itemize}
    \item the row spaces of $X^TXX^T$ and $X^T$ are the same.
    \item the symmetric matrices $X^TX$ and $XX^T$ have the same nonzero eigenvalues with the same multiplicity. 
\end{itemize}
\end{prop}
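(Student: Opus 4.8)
The plan is to prove both bullet points using two standard facts over $\R$: that the row space of a matrix is the orthogonal complement of its (right) kernel, and that the Gram matrices $X^TX$ and $XX^T$ are positive semidefinite. For the first bullet, I would note that both $X^TX X^T$ and $X^T$ are $n\times m$ matrices whose rows live in $\R^m$, and that for any matrix $A$ the row space equals $(\ker A)^\perp$. Hence it suffices to show the kernels coincide: $\ker(X^TXX^T)=\ker(X^T)$ as subspaces of $\R^m$. The inclusion $\ker(X^T)\subseteq\ker(X^TXX^T)$ is immediate. For the reverse inclusion I would use the usual pairing trick: if $X^TXX^Tv=0$, set $w=X^Tv$ so that $X^TXw=0$; then $0=w^TX^TXw=|Xw|^2$ forces $Xw=XX^Tv=0$, and one further pairing $0=v^TXX^Tv=|X^Tv|^2$ gives $X^Tv=0$. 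Thus the kernels agree, and therefore so do the row spaces.

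For the second bullet, I would exploit the intertwining relation $X\,(X^TX)=(XX^T)\,X$. Fix a nonzero eigenvalue $\lambda$ and consider the eigenspaces $E_\lambda(X^TX)$ and $E_\lambda(XX^T)$. Multiplication by $X$ maps the first into the second, since $X^TXv=\lambda v$ implies $XX^T(Xv)=X(X^TXv)=\lambda(Xv)$; moreover $Xv\neq 0$ whenever $v\neq 0$, because $X^TXv=\lambda v\neq 0$. Symmetrically, multiplication by $X^T$ carries $E_\lambda(XX^T)$ into $E_\lambda(X^TX)$. The key point is that on $E_\lambda(X^TX)$ the composite $X^T\circ X$ acts as $\lambda\,\Id$ (and likewise $X\circ X^T$ acts as $\lambda\,\Id$ on the other eigenspace), so these two maps are mutually inverse up to the nonzero scalar $\lambda$. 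In particular each is injective, which forces $\dim E_\lambda(X^TX)=\dim E_\lambda(XX^T)$. Since $X^TX$ and $XX^T$ are symmetric and hence diagonalizable, geometric and algebraic multiplicities coincide, so every nonzero eigenvalue occurs in both matrices with the same multiplicity.

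I do not expect a serious obstacle, as both statements are classical. The only points requiring care are restricting to \emph{nonzero} eigenvalues (the scalar $\lambda$ must be invertible for the mutual-inverse argument, and zero eigenvalues genuinely differ in multiplicity when $m\neq n$), and explicitly invoking the symmetry of the Gram matrices to upgrade the equality of eigenspace dimensions to equality of algebraic multiplicities.
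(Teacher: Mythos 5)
Your proposal is correct and follows essentially the same route as the paper: the first bullet via the identical pairing trick showing $\ker(X^TXX^T)=\ker(X^T)$, and the second via the intertwining $X(X^TX)=(XX^T)X$ with an injectivity argument between $\lambda$-eigenspaces (your ``composite acts as $\lambda\,\Id$'' observation is exactly what the paper uses when it applies $X^T$ to a dependence relation among the $Xv_i$). Your explicit remark that symmetry makes geometric and algebraic multiplicities coincide is a nice point of care that the paper leaves implicit.
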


\begin{proof}
\begin{itemize}
\item Let us denote by $R(Y)$ the row space of a matrix $Y$. It is clear that $R(X^TXX^T) \subset R(X^T)$. For the reverse direction, it is enough to show $\ker(X^TXX^T) \subset \ker(X^T)$. Let $y \in \ker(X^TXX^T)$. Then we have
$$ 0 =   y^TXX^TXX^T y  = \|XX^Ty\|^2,$$
so $XX^Ty = 0$. Thus
$$ 0 =   y^TXX^Ty  = \|X^Ty\|^2,$$
showing $X^Ty =0$, so $y \in \ker(X^T)$. Thus $\ker(X^TXX^T) \subset \ker(X^T)$ as desired. 

\item Let $\lambda$ be a nonzero eigenvalue of $X^TX$ with multiplicity $k$. Then there exist $k$ orthonormal eigenvectors $v_1, \dots, v_k$ with eigenvalue $\lambda$. Then, $(XX^T)(X v_i) = X(X^TX) v_i = \lambda X v_i$, so $Xv_i$ are eigenvectors of $XX^T$ with eigenvalue $\lambda$. Moreover, we claim they are linearly independent. Let $\alpha_i$ be such that $\sum_i \alpha_i Xv_i = 0$. Then $ 0 = X^T(\sum_i \alpha_i Xv_i) = \sum_i \alpha_i X^TXv_i = \sum_i \alpha_i \lambda v_i = \lambda \sum_i \alpha_i v_i$. Since $v_i$'s are linearly independent, we must have $\alpha_i = 0$ for each $i$. Thus $Xv_i$'s are linearly independent. 

Reverting the roles of $XX^T$ and $X^TX$ shows the $\lambda$-eigenspaces of $XX^T$ and $X^TX$ have the same dimension. Thus $X^TX$ and $XX^T$ have the same nonzero eigenvalues with multiplicity as desired.

\end{itemize}
\end{proof}

For any two $m \times n$ matrices $A, B \in M_{m\times n}(\R)$, one can define the \emph{Frobenius} or \emph{trace inner product} as

$$\langle A, B \rangle := \Tr(A^TB) = \sum_{i=1}^m \sum_{j=1}^n a_{ij}b_{ij},$$
which induces the norm
$$\| A\|^2:= \Tr(A^TA) = \sum_{i=1}^m \sum_{j=1}^n a_{ij}^2.$$

This is the usual Eulidean inner product by identifying $M_{m\times n}(\R)$ with $\R^{mn}$.

We thus have the familiar Cauchy-Schwarz inequality:

$$\langle A, B \rangle \le \|A\|\|B\|,$$
and we obtain equality if and only if $A$ and $B$ are nonnegative multiples of each other. 

The particular nice thing about the trace operator is that we have 
$$\Tr(A^TB) = \Tr(BA^T)$$

for any $A,B \in M_{m\times n}(\R)$.
\begin{prop}
 \label{ineq: key result}
Let $A \in O(m)$ be an orthogonal matrix. Let $x_1, \dots x_n$ be vectors in $\R^m$. Let $X$ be the associated $m \times n$ matrix
$$X = \begin{bmatrix}x_1 & \dots & x_n \end{bmatrix}.$$
Then 
$$\sum_{i,j=1}^{n} \langle  x_i, Ax_j \rangle^2 - \sum_{i,j=1}^{n} \langle  x_i, x_j \rangle^2  \le 0,$$
and we have equality if and only if $XX^T$ commutes with $A$. In particular, when we have equality, $\Span\{x_1, \dots, x_n\}$ is $A$-invariant. 

\end{prop}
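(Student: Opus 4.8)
The plan is to recast both double sums as squared Frobenius norms of $n \times n$ matrices and then collapse the whole inequality into a single application of Cauchy--Schwarz. The first step is to note that the $(i,j)$ entry of $X^T A X$ is precisely $\langle x_i, A x_j\rangle$, so that
$$\sum_{i,j=1}^n \langle x_i, A x_j\rangle^2 = \|X^T A X\|^2 \quad\text{and}\quad \sum_{i,j=1}^n \langle x_i, x_j\rangle^2 = \|X^T X\|^2,$$
the second identity being the first applied with $A = \Id$. Writing these norms through the trace and using the cyclic identity $\Tr(A^TB) = \Tr(BA^T)$ recorded above, I would introduce the symmetric positive semidefinite matrix $P := XX^T$ and its orthogonal conjugate $Q := A^T P A$, and rewrite
$$\|X^T A X\|^2 = \Tr\bigl(A^T P A P\bigr) = \Tr(QP) = \langle P, Q\rangle, \qquad \|X^T X\|^2 = \Tr(P^2) = \|P\|^2.$$
The claim then reduces to the single inequality $\langle P, Q\rangle \le \|P\|^2$.

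The decisive use of the hypothesis $A \in O(m)$ is that $Q = A^T P A = A^{-1} P A$ is conjugate to $P$; hence $Q$ is again symmetric with the same eigenvalues as $P$, so $\|Q\|^2 = \Tr(Q^2) = \Tr(P^2) = \|P\|^2$, i.e. $\|Q\| = \|P\|$. Cauchy--Schwarz now gives
$$\langle P, Q\rangle \le \|P\|\,\|Q\| = \|P\|^2,$$
which is exactly the desired inequality.

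For the equality discussion, I would invoke the equality case of Cauchy--Schwarz: equality forces $P$ and $Q$ to be nonnegative multiples of each other, and since $\|P\| = \|Q\|$ this means $Q = P$ (the case $P = 0$, where $X = 0$, being trivial). Now $Q = P$ reads $A^T P A = P$, equivalently $AP = PA$, i.e. $XX^T$ commutes with $A$; conversely commuting makes $Q = P$ and all estimates sharp. Finally, when $A$ commutes with $P = XX^T$ the image $\im(P)$ is $A$-invariant, since $A\,\im(P) = AP(\R^m) = PA(\R^m) \subseteq \im(P)$ with $A$ invertible; and $\im(XX^T) = \im(X) = \Span\{x_1, \dots, x_n\}$ because $\ker(XX^T) = \ker(X^T)$ (the computation $\|X^T y\|^2 = y^T X X^T y$ as in Proposition \ref{linearalgebra}) together with $\im(M) = \ker(M^T)^\perp$. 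This yields the $A$-invariance of $\Span\{x_1, \dots, x_n\}$.

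I do not expect a genuine obstacle here; the only real subtlety is the bookkeeping that converts the double sums into $\langle P, Q\rangle$ and $\|P\|^2$ via the cyclic trace identity, together with the realization that orthogonality of $A$ is exactly what equalizes $\|Q\|$ and $\|P\|$ and thereby makes Cauchy--Schwarz tight enough to produce the clean bound. Once that reformulation is in place, both the inequality and its equality case fall out of the standard Cauchy--Schwarz statement already recorded.
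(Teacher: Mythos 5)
Your proposal is correct and follows essentially the same route as the paper's proof: both reduce the double sums to $\langle XX^T, A^TXX^TA\rangle \le \|XX^T\|\,\|A^TXX^TA\|$ via trace cyclicity, use orthogonality of $A$ to get $\|A^TXX^TA\| = \|XX^T\|$, and extract the equality case $A^TXX^TA = XX^T$ from Cauchy--Schwarz. Your final step is in fact slightly cleaner than the paper's: you deduce $A$-invariance of $\Span\{x_1,\dots,x_n\}$ directly from $A\,\im(XX^T) = \im(XX^T)$ together with $\im(XX^T) = \im(X)$, whereas the paper multiplies the equality by $X^T$ and invokes its row-space lemma (Proposition \ref{linearalgebra}); both arguments are valid and elementary.
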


\begin{proof}
Notice that $X^TX = [\langle x_i,  x_j\rangle]_{i,j=1}^n$, and $X^TAX = [\langle x_i,  Ax_j\rangle]_{i,j=1}^n$, 
so 

\begin{center}
$ \displaystyle\|X^TX\|^2 =\sum_{i,j=1}^{n} \langle  x_i, x_j \rangle^2$, and 
    $\displaystyle\| X^TAX \|^2 =\sum_{i,j=1}^{n} \langle  x_i, Ax_j \rangle^2$.
\end{center}
Moreover, we also have that 
\begin{center}
$\|XX^T\|^2 = \Tr(XX^TXX^T) = \Tr(X^TXX^TX) =  \|X^TX\|^2$. 
\end{center}
We calculate 
\begin{align*}
 \| A^TXX^TA \|^2
 &= \Tr(A^TXX^TAA^TXX^TA ) \\
 & = \Tr(A^TXX^TXX^TA ) \\
 & = \Tr(AA^TXX^TXX^T) \\
  & = \Tr(XX^TXX^T) \\
  & =  \| XX^T  \|^2.
\end{align*}

Thus
\begin{align*}
\sum_{i,j=1}^{n} \langle  x_i, Ax_j \rangle^2   &= \| X^TAX\|^2 \\
& = \Tr((X^TAX)^TX^TAX) \\
& = \Tr((X^TA^TXX^TA)X) \\
& = \Tr(X(X^TA^TXX^TA)) \\
& = \Tr((XX^T)(A^TXX^TA)) \\
& = \langle XX^T,  A^TXX^TA \rangle  \\
& \le \| XX^T\|\| A^TXX^TA\| \\
& = \| XX^T\|\|XX^T\| \\
& = \| XX^T\|^2 \\
& = \sum_{i,j=1}^{n}\langle  x_i, x_j \rangle^2,
\end{align*}
as desired. We have equality if and only if we obtain equality in the Cauchy-Schwarz inequality, which happens if and only if $XX^T$ and $A^TXX^TA$ are nonnegative multiples of each other. Since $\| A^TXX^TA \| = \|XX^T \|$, we have equality if and only if $A^TXX^TA = XX^T$, that is, when $XX^TA = AXX^T$.

Lastly, suppose we have $A^TXX^TA = XX^T$. Multiplying $X^T$ on the left to this equation yields $X^TXX^T = X^TA^TXX^TA = X^TA^TX(A^TX)^T$. This shows that the row space of $X^T X X^T$ is contained in the the row space of $(A^TX)^T$. By Proposition \ref{linearalgebra}, this means the row space of $X^T$ is contained in the row space of $(A^TX)^T$, so $\Span\{x_1, \dots, x_n\} \subset \Span\{A^T(x_1), \dots, A^T(x_n)\}$. Since they have the same dimension, we have $\Span\{x_1, \dots, x_n\} = \Span\{A^T(x_1), \dots, A^T(x_n)\}$, so $\Span\{x_1, \dots, x_n\}$ is $A^T$-invariant. Since $A$ is orthogonal, $A^{T}= A^{-1}$, $A$-invariant spaces and $A^T$-invariant spaces coincide.
\end{proof}

We also have the following nice result.
\begin{prop}
 \label{ineq: sum}
Let $x_1, \dots x_n$ be vectors such that $\dim \Span\{x_1, \dots, x_n\} = m$. 
Then 
$$m\sum_{i,j=1}^{n} \langle  x_i, x_j \rangle^2  \ge \sum_{i,j=1}^{n}|x_i|^2|x_j|^2,$$
and we have equality if and only if all the $x_i$'s have the same length and are mutually orthogonal.

\end{prop}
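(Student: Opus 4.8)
The plan is to package the vectors into one symmetric positive semidefinite matrix and read both sides off its spectrum. Let $X = \begin{bmatrix} x_1 & \cdots & x_n\end{bmatrix}$ and let $G = X^TX = [\langle x_i, x_j\rangle]_{i,j=1}^n$ be the Gram matrix, exactly as in the proof of Proposition \ref{ineq: key result}. Then
$$\sum_{i,j=1}^n \langle x_i, x_j\rangle^2 = \|G\|^2 = \Tr(G^2), \qquad \sum_{i,j=1}^n |x_i|^2|x_j|^2 = \left(\sum_{i=1}^n |x_i|^2\right)^2 = (\Tr G)^2,$$
the second equality because $\Tr G = \sum_i |x_i|^2$. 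Hence the claimed inequality is precisely $m\,\Tr(G^2) \ge (\Tr G)^2$, and the whole statement becomes a fact about the eigenvalues of $G$.

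Second, I would diagonalize $G$. It is symmetric, positive semidefinite, and $\operatorname{rank} G = \operatorname{rank} X = \dim\Span\{x_1,\dots,x_n\} = m$, so its eigenvalues are $\mu_1 \ge \cdots \ge \mu_m > 0$ together with $n-m$ zeros. Then $\Tr(G^2) = \sum_{k=1}^m \mu_k^2$ and $\Tr G = \sum_{k=1}^m \mu_k$, so the inequality reduces to
$$m\sum_{k=1}^m \mu_k^2 \ge \left(\sum_{k=1}^m \mu_k\right)^2,$$
which is immediate from the Cauchy--Schwarz inequality applied to $(\mu_1,\dots,\mu_m)$ and $(1,\dots,1)$ in $\R^m$ (equivalently, the nonnegativity of the variance of the $\mu_k$). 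This settles the inequality with essentially no computation.

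The step I expect to be the main obstacle is the equality case. Cauchy--Schwarz is sharp exactly when $(\mu_1,\dots,\mu_m)$ is proportional to $(1,\dots,1)$, i.e.\ when $\mu_1 = \cdots = \mu_m =: \mu$. The easy direction is that mutual orthogonality of the $x_i$ with a common length $\ell > 0$ forces them to be independent, so $n = m$ and $G = \ell^2 I_m$, giving equality. For the converse I would pass to the frame operator $S = XX^T$, whose nonzero eigenvalues coincide with those of $G$ by Proposition \ref{linearalgebra}; the condition $\mu_1 = \cdots = \mu_m$ then says exactly that $S = \mu\,\Id$ on $\Span\{x_1,\dots,x_n\}$, i.e.\ that $\{x_i\}$ is a tight frame for its span. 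The point requiring care is that this recovers ``mutually orthogonal of equal length'' only when the $x_i$ are linearly independent, i.e.\ $n = m$ (then $G = \mu I_m$ forces $\langle x_i,x_j\rangle = \mu\delta_{ij}$); for $n > m$ one cannot have $n$ mutually orthogonal nonzero vectors in an $m$-dimensional space at all, yet equality still occurs for genuine tight frames such as three unit vectors at $120^\circ$ in $\R^2$. I would therefore prove the equality statement under the hypothesis that the $x_i$ are independent, or restate it in tight-frame form, and check which version is actually needed in the geometric application.
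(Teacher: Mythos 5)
Your proof of the inequality itself is correct and is essentially the paper's argument in spectral form: the paper applies Cauchy--Schwarz in the Frobenius inner product, $m\|XX^T\|^2 = \|\Id_m\|^2\,\|XX^T\|^2 \ge \langle \Id_m, XX^T\rangle^2 = \Tr(XX^T)^2$, which after diagonalizing $XX^T$ is exactly your $m\sum_{k=1}^m \mu_k^2 \ge \bigl(\sum_{k=1}^m \mu_k\bigr)^2$; the two routes are interchangeable and cost the same.

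Where you genuinely diverge --- and in fact catch a real error --- is the equality case. The paper argues: equality forces $XX^T = c\,\Id_m$, hence by Proposition \ref{linearalgebra} ``$X^TX$ is a diagonalizable matrix with all eigenvalues equal,'' hence $X^TX$ is a scalar multiple of $\Id_n$, hence the $x_i$ are mutually orthogonal of equal length. The middle step fails when $n > m$: Proposition \ref{linearalgebra} controls only the \emph{nonzero} eigenvalues, and $X^TX$ additionally has the eigenvalue $0$ with multiplicity $n-m$, so it is not a multiple of $\Id_n$. Your tight-frame example is a genuine counterexample to the stated ``only if'': for three unit vectors at $120^\circ$ in $\R^2$ one has $XX^T = \tfrac{3}{2}\Id_2$, both sides of the inequality equal $9$, yet the vectors are not orthogonal. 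The correct characterization of equality is the one you give: $XX^T = \mu\,\Id$ on $\Span\{x_1,\dots,x_n\}$ (a tight frame for the span), which reduces to ``equal lengths and mutually orthogonal'' precisely when $n = m$. Fortunately the slip is harmless downstream: the only place Proposition \ref{ineq: sum} is invoked (the remark following Proposition \ref{prop: octo ineq}, as an alternative proof for $n \le 8$) uses the inequality alone, never the equality case, and the equality analysis that matters for the main theorems comes from Proposition \ref{ineq: key result}, whose equality case is proved correctly. Your proposed fix --- restate the equality clause in tight-frame form, or add the hypothesis that the $x_i$ be linearly independent --- is exactly right.
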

\begin{proof}
Without loss of generality we can assume $x_1, \dots x_n \in \R^m$. Let $X$ be the associated $m \times n$ matrix
$$X = \begin{bmatrix}x_1 & \dots & x_n \end{bmatrix}.$$ 

Then $\|XX^T\|^2 = \|X^TX\|^2 = \sum_{i,j=1}^{n} \langle  x_i, x_j \rangle^2 $, and $\Tr(XX^T) = \Tr(X^TX) = \sum_{i=1}^n |x_i|^2$.

Using Cauchy-Schwarz inequality, we have
\begin{align*}
m\sum_{i,j=1}^{n} \langle  x_i, x_j \rangle^2 & = \|\Id_m\|^2   \|XX^T\|^2 \\
& \ge \langle \Id_m, XX^T \rangle^2 \\
& = \Tr(XX^T)^2 \\
& = (\sum_{i=1}^n |x_i|^2)^2 \\
& = \sum_{i,j=1}^{n}|x_i|^2|x_j|^2.
\end{align*}

To have equality, we need $XX^T$ to be a scalar multiple of the identity. By Proposition \ref{linearalgebra}, $X^TX$ is a diagonalizable matrix with all eigenvalues equal. Thus it must also be a scalar multiple of the identity. This happens if and only if all the $x_i$'s have the same length and are mutually orthogonal.
\end{proof}

\section{Main results}
\label{geometricapplications}
\subsection{Preliminaries}
\subsubsection{Second variation formula}
Let $\Phi:(\Sigma,g) \rightarrow (M,\bar{g})$ be a minimal isometric immersion of an $n$-dimensional compact Riemannian manifold $(\Sigma,g)$. Let $C^\infty(\Phi^*TM))$ denote the space of all $C^\infty$-vector fields along $\Phi$. For any $V$ in $C^\infty(\Phi^*TM)$, let $\{\Phi_t\}$ be a $C^\infty$-one-parameter family of immersions of $\Sigma$ into $M$ such that $\Phi_0 = \Phi$ and $\frac{d}{dt}\Phi_t(x)\vert_{t = 0} =
V_x$ for each $x \in \Sigma$. Then we have the classical second variational formula

\begin{thm}[Second Variation formula]
\label{SVF} Under the above assumptions,
\begin{center}
    $\displaystyle \frac{d^{2}}{dt^{2}} |\Phi_t(\Sigma)|\biggr\vert_{t = 0}=-\int_{\Sigma}\langle  J_{\Sigma}V^N,V^N \rangle d\Sigma$,
\end{center}
where $V^N$ the normal
component of $V$ and $J_{\Sigma}$ is the elliptic Jacobi operator on the normal bundle $N(\Sigma)$ defined by\\

\begin{center}
    $\displaystyle J_{\Sigma}(X):=\Delta^{\perp}X+(\sum_{i=1}^{n}R^{M}(X,e_{i})e_{i})^{\perp}+\sum_{i,j=1}^{n} \langle  B(e_{i},e_{j}),X \rangle B(e_{i},e_{j}) $,
\end{center}
and the normal Laplacian on the normal bundle $N(\Sigma)$ is given by

\begin{center}
    $\displaystyle \Delta^{\perp}X=\sum_{i=1}^{n}(\nabla^{\perp}_{e_{i}}\nabla^{\perp}_{e_{i}}X-\nabla^{\perp}_{(\nabla_{e_{i}}e_{i})^{T}}X)$.
\end{center}
Here, $\{e_{1},\ldots,e_{n}\}$ is an orthonormal basis of $T\Sigma$, $\nabla$ is the connection of $M$, $\nabla^{\perp}$ is the normal connection of $\Sigma$ in $M$, $B$ is the second fundamental form of $\Sigma$ in $M$, and $R^{M}$ is the curvature tensor of $M$. 
\end{thm}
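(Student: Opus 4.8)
The plan is to obtain the formula by differentiating the area functional twice and simplifying using minimality and integration by parts. Writing $g_{ij}(t) = \bar g(\Phi_{t,*}e_i, \Phi_{t,*}e_j)$ for the induced metrics, we have $|\Phi_t(\Sigma)| = \int_\Sigma \sqrt{\det(g_{ij}(t))}\, d\Sigma$. Since $[\partial_t, e_i] = 0$ and $\nabla$ is torsion-free, $\nabla_{\partial_t}(\Phi_{t,*}e_i) = \nabla_{e_i}V$ where $V = \partial_t\Phi_t|_{t=0}$, and differentiating the volume factor once gives $\frac{d}{dt}\big|_{t=0}|\Phi_t(\Sigma)| = \int_\Sigma \sum_{i}\langle \nabla_{e_i}V, e_i\rangle\, d\Sigma$ in an orthonormal frame $\{e_i\}$ at the point. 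Splitting $V = V^T + V^N$, the tangential part contributes $\operatorname{div}_\Sigma V^T$, which integrates to zero, and the normal part contributes $-\langle V^N, \sum_i B(e_i,e_i)\rangle$; minimality makes both vanish, so $\Phi$ is a critical point and the second variation is well-defined.

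Next I would reduce to a purely normal variation. Because area is invariant under diffeomorphisms of $\Sigma$, one may compose $\Phi_t$ with a $t$-dependent reparametrization generated by (an extension of) $V^T$ without changing $|\Phi_t(\Sigma)|$; since $\Phi$ is a critical point, the resulting change in $\frac{d^2}{dt^2}\big|_{t=0}$ involves only first-variation quantities, which vanish. Hence I may assume $V = X$ is normal. Differentiating the first-variation integrand once more produces two kinds of terms: the derivative of the volume factor $\sqrt{\det g_t}$, whose leading piece is killed by minimality, and the derivative of $\sum_i \langle \nabla_{e_i}X, e_i\rangle$, which requires commuting $\nabla_{\partial_t}$ past $\nabla_{e_i}$. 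By the definition of the curvature tensor this commutation introduces the term $\sum_i R^M(X, e_i)e_i$. Collecting the pieces and projecting onto tangential and normal components yields the pointwise identity
$$\frac{d^2}{dt^2}\Big|_{t=0}\sqrt{\det g_t} = |\nabla^\perp X|^2 - \sum_{i,j}\langle B(e_i,e_j), X\rangle^2 - \sum_i \langle R^M(X,e_i)e_i, X\rangle + \operatorname{div}_\Sigma W,$$
for a tangent field $W$ built from $X$, whose divergence integrates to zero on the closed manifold $\Sigma$.

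Finally, I would integrate over $\Sigma$ and integrate by parts. The operator $\Delta^\perp$ as defined is the trace (rough) Laplacian on the normal bundle, so on the closed $\Sigma$ we have $\int_\Sigma \langle \Delta^\perp X, X\rangle\, d\Sigma = -\int_\Sigma |\nabla^\perp X|^2\, d\Sigma$. Substituting replaces $\int_\Sigma |\nabla^\perp X|^2$ with $-\int_\Sigma \langle \Delta^\perp X, X\rangle$, and regrouping the three terms reproduces exactly $-\int_\Sigma \langle J_\Sigma X, X\rangle\, d\Sigma$ with $J_\Sigma$ as stated; taking $X = V^N$ gives the claim for a general $V$. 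The main obstacle is the bookkeeping in the second step: commuting the temporal and spatial covariant derivatives to extract the curvature term with the correct sign, and cleanly separating the genuine second-order normal contribution from the total-divergence terms that vanish after integration. The reduction to normal variations also merits care, as that is precisely where minimality is used.
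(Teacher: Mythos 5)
The paper offers no proof of this statement: it is quoted as the classical second variation formula (going back to Simons \cite{simons1968minimal}) and used as a black box, so there is no in-paper argument to compare against. Your sketch is the standard textbook derivation and is correct in outline: first variation in an orthonormal frame, vanishing at a minimal immersion; reduction to normal variations via diffeomorphism invariance plus criticality; the pointwise identity
$$\frac{d^2}{dt^2}\Big|_{t=0}\sqrt{\det g_t} = |\nabla^\perp X|^2 - \sum_{i,j}\langle B(e_i,e_j), X\rangle^2 - \sum_i \langle R^M(X,e_i)e_i, X\rangle + \operatorname{div}_\Sigma W;$$
and integration by parts turning $\int_\Sigma |\nabla^\perp X|^2$ into $-\int_\Sigma \langle \Delta^\perp X, X\rangle$, which regroups exactly into $-\int_\Sigma \langle J_\Sigma X, X\rangle\, d\Sigma$ with the operator as defined (note your curvature sign convention, $\langle R^M(X,e_i)e_i, X\rangle > 0$ in positive curvature, is the one the formula requires). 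Two small points of hygiene: the tangential first-variation term $\operatorname{div}_\Sigma V^T$ vanishes by the divergence theorem on the closed $\Sigma$, not by minimality --- only the $-\langle V^N, \sum_i B(e_i,e_i)\rangle$ term needs $H=0$; and in the second-derivative bookkeeping the cancellation producing the coefficient $-1$ on $\sum_{i,j}\langle B(e_i,e_j),X\rangle^2$ comes from $|(\nabla_{e_i}X)^T|^2 = \sum_j \langle B(e_i,e_j),X\rangle^2$ offsetting part of $-\tfrac{1}{2}\sum_{i,j}\dot g_{ij}^2 = -2\sum_{i,j}\langle B(e_i,e_j),X\rangle^2$, which your sketch correctly flags as the main place where care is needed.
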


We say a minimal immersion $\Phi$ is \emph{stable} if for all $V \in C^\infty(\Phi^*\Gamma(M))$,
$$-\int_{\Sigma}\langle  J_{\Sigma}V^N,V^N \rangle \, d\Sigma \ge 0.$$

\subsubsection{Immersion into a Riemannian product}
\label{subsubsec: imm}

The general setting of our problem is as follows. Let $\Phi: \Sigma \to \bar{M} = M_1 \times M_2$ be an isometric immersion, where $\Sigma$ is an $n$-dimensional compact Riemannian manifold, $M_1$ is  an $m_1$-dimensional compact Riemannian submanifold, and $M_2$ is an $m_2$-dimensional Riemannian manifold. Let $d = m_1 + m_2 - n$ denote the codimension of $\Sigma$ in $M_1 \times M_2$.
We further let $f_1: M_1 \to \R^{N_1}$ be an isometric embedding of $M_1$ in an Euclidean space, and let $F: M_1 \times M_2 \to \R^{N_1} \times M_2$ be defined by $F = f_1 \times \Id_{M_2}$. Let $B_1$ denote the second fundamental form of the immersion $f_1$. We can then consider the chain of immersions $\Sigma \xrightarrow{\Phi} M_1 \times M_2 \xrightarrow{F}  \R^{N_1} \times M_2$.

Let $p \in \Sigma$ be a point. We choose a local orthonormal frame $\{e_1, \dots, e_n, \eta_1, \dots, \eta_{d}\}$ near $\Phi(p) \in M_1 \times M_2$ such
that $\{e_1,\dots, e_n\}$ is an orthonormal frame in $d\Phi(T\Sigma)$ and $\{\eta_{1}, \dots, \eta_{d}\}$ is an orthonormal frame in $N\Sigma \subset T(M_1 \times M_2)$. For any tangent vector $v$ at $\Phi(p) = (p_1, p_2) \in M_1 \times M_2$, we decompose $v$ as $v = (v^1, v^2)$, where $v^i$ is tangent to
$M_i$ at $p_i$. We will thus write $e_i = (e_i^1, e_i^2)$ and $\eta_\beta = (\eta_\beta^1, \eta_\beta^2)$.


For a fixed vector $U \in \R^{N_1}$, by identifying $U$ with $(U, 0) \in T_{F(\Phi(p))} (\R^{N_1} \times M_2)$, we can decompose $U$ as 
$$U = U^T + U^N,$$
where $U^T \in dF(T_{\Phi(p)}(M_1 \times M_2))$, and $U^N \in N_{\Phi(p)}(M_1 \times M_2)$. For $U^T \in dF(T_{\Phi(p)}(M_1 \times M_2))$, by identifying it with its preimage in $T_{\Phi(p)}(M_1 \times M_2)$ and considering the immersion $\Phi: \Sigma \to M_1 \times M_2$, we can further decompose it as 
$$U^T = T_U + N_U,$$
where $T_U \in d\Phi(T_p \Sigma)$ and $N_U \in N_p \Sigma \subset T_{\Phi(p)}(M_1 \times M_2)$.

The case that will be interesting to us is when $\Sigma$ is a stable minimal submanifold of $M_1 \times M_2$. To study this case, we need to use some appropriate normal sections in the second variation formula. To this end, let $\{E_1,\dots, E_{N_1}\}$ be an orthonormal basis of $\R^{N_1}$. Then $N_{E_{N_A}}$, $A= 1, \dots, N_1$ will be the normal sections we use. Computation shows that
\begin{lem}\cite[Equation (2.8)]{chen2013stable} \label{lem: 2FF computation}
$$\sum_{A=1}^{N_1} -\langle  N_{E_A},J_{\Sigma}(N_{E_A})\rangle = \sum_{i=1}^{n} \sum_{\beta =1}^d \Big( \langle B_1(\eta_\beta^1, \eta_\beta^1),B_1(e_i^1, e_i^1) \rangle - 2\|B_1(e_i^1,\eta_\beta^1)\|^2 \Big).$$
\end{lem}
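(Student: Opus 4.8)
The plan is to compute the pointwise quantity $\sum_{A} -\langle N_{E_A}, J_\Sigma(N_{E_A})\rangle$ term by term against the three pieces of the Jacobi operator, exploiting two features of the setup: each $E_A$ is a parallel vector field for the ambient connection $\bar\nabla$ of $\R^{N_1}\times M_2$, and $N_{E_A}$ is tangent to $M_1$ (its $M_2$-component vanishes because $E_A=(E_A,0)$ projects into $df_1(TM_1)$). First I would record the orthogonal decomposition $E_A = T_{E_A} + N_{E_A} + E_A^N$ into the parts tangent to $\Sigma$, normal to $\Sigma$ but tangent to $\bar M = M_1\times M_2$, and normal to $\bar M$, and then differentiate the identity $\bar\nabla_{e_i} E_A = 0$. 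Splitting the resulting equation into its three orthogonal components (tangent to $\Sigma$; normal to $\Sigma$ but tangent to $\bar M$; normal to $\bar M$) produces the structure equations; the middle component expresses $\nabla^\perp_{e_i} N_{E_A}$ through the second fundamental form $B$ of $\Phi$, the shape operator $A^F$ of $F$, and hence through $B_1$, which is exactly what is needed to control the normal Laplacian $\Delta^\perp N_{E_A}$.

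The central computational device is the trace identity
$$\sum_{A=1}^{N_1} \langle E_A, v\rangle\langle E_A, w\rangle = \langle v^1, w^1\rangle \qquad (v,w \in T\bar M),$$
valid because $\{E_A\}$ is orthonormal in $\R^{N_1}$ and $f_1$ is an isometric embedding; it collapses every sum over $A$ of two factors that are linear in $E_A$ into an inner product of $M_1$-components. I would apply it to each piece of $J_\Sigma$ in turn. For the curvature term I would use that $\bar M$ is a Riemannian product, so $R^{\bar M}(N_{E_A},e_i)e_i$ only involves the $M_1$ factor, and then invoke the Gauss equation for $M_1 \subset \R^{N_1}$ to rewrite $R^{M_1}$ in terms of $B_1$. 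The second-fundamental-form term $\sum_{i,j}\langle B(e_i,e_j),N_{E_A}\rangle^2$ collapses directly under the trace identity. The Laplacian term $-\langle N_{E_A},\Delta^\perp N_{E_A}\rangle$ I would evaluate in a geodesic frame at $p$ (so that $(\nabla_{e_i}e_i)^T=0$ there), differentiating the structure equation for $\nabla^\perp N_{E_A}$ one further time.

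The main obstacle I anticipate is the bookkeeping in the Laplacian term together with the cancellations across the three pieces. Computing the curvature term in isolation produces, besides the desired diagonal contributions, off-diagonal terms weighted by $\langle \eta_\beta^1,\eta_\gamma^1\rangle = \delta_{\beta\gamma} - \langle \eta_\beta^2,\eta_\gamma^2\rangle$ and carrying the wrong sign relative to the claimed right-hand side; these spurious terms, along with the $M_2$-component contributions they drag in, must be shown to cancel exactly against the second-order terms from $\Delta^\perp$ and the squares from the $B$-term. I would organize the argument by sorting all contributions into $M_1$-tangential and $M_1$-normal parts and applying the trace identity and structure equations repeatedly, with the minimality of $\Phi$ (so that $\sum_i B(e_i,e_i)=0$) used to discard the residual mean-curvature traces, until only the combination $\langle B_1(\eta_\beta^1,\eta_\beta^1),B_1(e_i^1,e_i^1)\rangle - 2\|B_1(e_i^1,\eta_\beta^1)\|^2$ survives.
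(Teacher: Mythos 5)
The paper itself does not prove this lemma: it is quoted verbatim from Chen--Wang \cite[Eq.~(2.8)]{chen2013stable}, and the derivation there is precisely the computation you outline (differentiate $D_{e_i}E_A=0$ in $\R^{N_1}\times M_2$, split into the three orthogonal components, sum over $A$ using the trace identity, invoke Gauss-type equations and minimality). So your architecture matches the source. But your plan contains one concretely false step: the claim that $N_{E_A}$ is tangent to $M_1$. What is true is that the part of $E_A$ tangent to $\bar M=M_1\times M_2$, namely $T_{E_A}+N_{E_A}$, lies in $TM_1\oplus 0$; the further splitting into $T_{E_A}\in d\Phi(T\Sigma)$ and $N_{E_A}\in N\Sigma$ is taken relative to $\Sigma$, whose frames $e_i=(e_i^1,e_i^2)$ and $\eta_\beta=(\eta_\beta^1,\eta_\beta^2)$ are not aligned with the product splitting. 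Explicitly, $N_{E_A}=\sum_\beta\langle E_A,\eta_\beta^1\rangle(\eta_\beta^1,\eta_\beta^2)$ has $M_2$-component $\sum_\beta\langle E_A,\eta_\beta^1\rangle\,\eta_\beta^2$, which is generically nonzero (consider a diagonal geodesic in a cylinder $S^1\times\R$). Your later paragraph, which produces off-diagonal terms weighted by $\langle\eta_\beta^1,\eta_\gamma^1\rangle$, in fact contradicts this claim, so the proposal is internally inconsistent on the point.

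The error matters because the ensuing assertion that $R^{\bar M}(N_{E_A},e_i)e_i$ "only involves the $M_1$ factor" fails: after summing over $A$, the curvature term contains $\sum_{\beta,\gamma}\langle\eta_\beta^1,\eta_\gamma^1\rangle\,\langle R^{M_2}(\eta_\beta^2,e_i^2)e_i^2,\eta_\gamma^2\rangle$, and since $M_2$ is an arbitrary Riemannian manifold this neither vanishes nor appears in the asserted right-hand side; dropping it, the computation cannot close. The cancellation mechanism, which your final paragraph gestures at but never identifies, is the Codazzi equation of $\Sigma$ in $\bar M$: when you differentiate the structure equation $\nabla^\perp_{e_i}N_{E_A}=(A^F_{\nu_A}e_i)^{N\Sigma}-B(e_i,T_{E_A})$ once more inside $\Delta^\perp$, the term $\sum_i(\nabla_{e_i}B)(e_i,T_{E_A})$ is converted via Codazzi, together with minimality (which kills $\nabla_{T_{E_A}}\sum_i B(e_i,e_i)$), into ambient curvature terms $(R^{\bar M}(e_i,T_{E_A})e_i)^{\perp}$ whose $M_2$-parts exactly cancel the spurious $R^{M_2}$ contributions above. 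Your sketch invokes only the Gauss equation of $M_1\subset\R^{N_1}$; without the Codazzi step the "bookkeeping" you defer is not merely tedious but is missing the one identity that removes all $M_2$-curvature terms. With the $M_2$-component of $N_{E_A}$ retained and the Codazzi equation added, your plan does reproduce the cited formula.
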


In the next three subsections, we will let $M_1$ be $\C P^{m_1/2}$, $\h P^{m_1/4}$, and $\O P^{2}$ respectively, and study the stable compact minimal immersions $\Sigma \to M_1 \times M_2$. For these projective spaces, there exist isotropic embeddings into Euclidean space so that $\|B_1(X,X)\|$ is constant for any unit vector $X \in TM_1$. In all these cases, we will show $\sum_{A=1}^{N_1} -\langle  N_{E_A},J_{\Sigma}(N_{E_A})\rangle \equiv 0$, and therefore obtain geometric information about $\Sigma$.

\subsection{Stable minimal submanifolds in \texorpdfstring{$\mathbb{C}P^{\frac{m_{1}}{2}}\times \mathbb{C}P^{\frac{m_{2}}{2}}$}{the product of two complex projective spaces}.}

We equip each $\C P^{m_k/2}$, $k=1,2$ with the Fubini-Study metric of constant
holomorphic sectional curvature $\lambda_k^2 : = \frac{2m_{k}}{m_k+2}$. 

Let $\Phi = (\phi_1, \phi_2) \colon \Sigma \to \bar{M} = \C P^{m_1/2} \times M_2$ be a stable compact minimal immersion of
codimension $d$ and dimension $n$.

Let $f_{1}:\mathbb{C}P^{\frac{m_{1}}{2}}\rightarrow \mathbb{R}^{\tilde{m}_1}$ be the isotropic embedding induced by the generalized Veronese embedding (see Section 2 of \cite{sakamoto1977planar}). The nice property about this embedding is that, if we let $B$ denote its second fundamental form, then at each $q\in \mathbb{C}P^{\frac{m_{1}}{2}}$, we have
$$\|B(X, X)\|^2 = \lambda_1^2 \text{ for any unit vector $X \in T_{q}\mathbb{C}P^{\frac{m_{1}}{2}}$}$$
(see Proposition 3.6 of \cite{ohnita1986stable}).

For each $v\in \mathbb{R}^{\tilde{m}_1}$, we identify $v$ with $(v, 0) \in T (\mathbb{R}^{\tilde{m}_1}\times M_2)$. Same as in Section \ref{subsubsec: imm}, we denote by $N_v$  the projection of $v$ onto the normal space $N_{p}\Sigma$ in $T_{\Phi(p)}\bar{M}$. We are now going to use the normal sections $N_v$ in the second variation formula, where $v$ runs over an orthonormal basis of the Euclidean space $\R^{\tilde{m}_1}$.

Let $p \in \Sigma$. Let $\{e_1, \dots, e_n\}$ be an orthonormal basis of $d\Phi(T_p\Sigma)$. Let $e_j^1$ be the projection of $e_j$ onto $T_{\phi_1(p)}\C P^{m_1/2}$ and $e_j^2$ be the projection of $e_j$ onto $T_{\phi_2(p)}M_2$. Let $J_1$ be the complex structure of $T_{\phi_1(p)}\C P^{m_1/2}$. 
\begin{lem}\cite[Lemma~3.1]{Ramirez3}
\label{GENERALC} Same assumptions as above. Let $\{E_{1},\ldots,E_{\tilde{m}_1}\}$ be the usual canonical basis of $\mathbb{R}^{\tilde{m}_1}$. Then

\begin{equation}
\label{MAIN}
\displaystyle\sum_{A=1}^{\tilde{m}_1} -\langle  N_{E_A},J_{\Sigma}(N_{E_A})\rangle = \lambda_1^{2} \Big (  \sum_{i,j=1}^{n}
\langle  J_1(e^{1}_{j}),e_{i}^{1}\rangle^{2}-\langle  e^{1}_{j},e^{1}_{i}\rangle^{2}\Big ).
\end{equation}
\end{lem}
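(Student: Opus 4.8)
The plan is to read the left-hand side of \eqref{MAIN} off from Lemma \ref{lem: 2FF computation}. With $B_1 = B$ the second fundamental form of the isotropic embedding $f_1$ of $\C P^{m_1/2}$, that lemma expresses $\sum_{A}-\langle N_{E_A},J_\Sigma(N_{E_A})\rangle$ as the pointwise double sum, over the tangent index $i$ and the normal index $\beta$, of the quantity $\langle B(\eta_\beta^1,\eta_\beta^1),B(e_i^1,e_i^1)\rangle - 2\|B(e_i^1,\eta_\beta^1)\|^2$. Thus the whole statement reduces to a purely algebraic identity for the quartic form $Q(X,Y,Z,W):=\langle B(X,Y),B(Z,W)\rangle$ on $T_q\C P^{m_1/2}$, followed by a re-indexing of the resulting sum; no further geometry of $\Sigma$ is needed beyond the fact that $\{e_i\}\cup\{\eta_\beta\}$ is a frame of the ambient tangent space.

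First I would pin down $Q$. Because $f_1$ is the equivariant first standard embedding, $Q$ is invariant under the isotropy group $U(m_1/2)$ and hence is a combination of the three symmetric $U(m_1/2)$-invariant quartics compatible with the symmetries of $B$, namely
\[
Q = a\,\langle X,Y\rangle\langle Z,W\rangle + b\bigl(\langle X,Z\rangle\langle Y,W\rangle + \langle X,W\rangle\langle Y,Z\rangle\bigr) + c\bigl(\langle J_1X,Z\rangle\langle J_1Y,W\rangle + \langle J_1X,W\rangle\langle J_1Y,Z\rangle\bigr).
\]
The isotropy relation $\|B(X,X)\|^2 = \lambda_1^2|X|^4$ gives $a+2b=\lambda_1^2$, and the Gauss equation together with the curvature tensor of a complex space form of holomorphic sectional curvature $\lambda_1^2$ fixes $b=c=\lambda_1^2/4$ and $a=\lambda_1^2/2$. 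Substituting $X=Y=\eta_\beta^1,\ Z=W=e_i^1$ and $X=Z=e_i^1,\ Y=W=\eta_\beta^1$ and simplifying with $\langle J_1u,v\rangle=-\langle u,J_1v\rangle$ collapses the per-term combination (with the sign arranged to match the convention of Lemma \ref{lem: 2FF computation}) to $\lambda_1^2\bigl(\langle e_i^1,\eta_\beta^1\rangle^2 - \langle J_1e_i^1,\eta_\beta^1\rangle^2\bigr)$.

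The step that makes the argument work in \emph{arbitrary} dimension is the re-summation. The frames $\{e_1,\dots,e_n\}$ and $\{\eta_1,\dots,\eta_d\}$ together form an orthonormal basis of $T_{\Phi(p)}\bar{M}=T\C P^{m_1/2}\oplus TM_2$, so their orthogonal projections $\{e_i^1\}\cup\{\eta_\beta^1\}$ onto $T\C P^{m_1/2}$ form a tight frame (resolution of the identity):
\[
\sum_{i=1}^n\langle w,e_i^1\rangle^2 + \sum_{\beta=1}^d\langle w,\eta_\beta^1\rangle^2 = |w|^2 \qquad \text{for all } w\in T_q\C P^{m_1/2}.
\]
Applying this to $w=e_i^1$ and to $w=J_1e_i^1$ (using $|J_1e_i^1|=|e_i^1|$) and subtracting yields, for each $i$, the identity $\sum_\beta\bigl(\langle e_i^1,\eta_\beta^1\rangle^2 - \langle J_1e_i^1,\eta_\beta^1\rangle^2\bigr) = \sum_j\bigl(\langle J_1e_i^1,e_j^1\rangle^2 - \langle e_i^1,e_j^1\rangle^2\bigr)$. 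Summing over $i$ trades the sum over the normal indices $\beta$ for a sum over the tangent indices $j$ and produces exactly $\lambda_1^2\bigl(\sum_{i,j}\langle J_1e_j^1,e_i^1\rangle^2 - \sum_{i,j}\langle e_j^1,e_i^1\rangle^2\bigr)$, the right-hand side of \eqref{MAIN}.

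The main obstacle is the explicit determination of $Q$: one must identify the correct $U(m_1/2)$-invariant ansatz, fix all three coefficients—in particular the size and sign of the $J_1$-term, which is invisible to the isotropy relation alone and must be extracted from the Gauss equation for the complex space form—and keep the global sign consistent with Lemma \ref{lem: 2FF computation}. (A quick check on $\C P^1\cong S^2$, where $B(X,Y)=-\langle X,Y\rangle\nu$, verifies $b=c$ and the overall normalization.) Once the per-term identity is in place, the re-indexing in the last step is elementary, and it is precisely this tight-frame identity, rather than any dimension-specific computation, that lets the equality hold for immersions of arbitrary dimension $n$.
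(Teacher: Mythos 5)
Your route is the same as the paper's (it is exactly the scheme used for the octonionic analogue, Lemma \ref{GENERALO}): reduce via the Chen--Wang normal-section computation, evaluate the quartic form $Q=\langle B(\cdot,\cdot),B(\cdot,\cdot)\rangle$ pointwise, and trade the normal-index sum for a tangent-index sum via the tight-frame identity. Your coefficients are correct: substituting the curvature tensor of $\C P^{m_1/2}$ (holomorphic sectional curvature $\lambda_1^2$) into the Gauss-equation identity \eqref{eqn: 2FF} gives precisely $a=\lambda_1^2/2$, $b=c=\lambda_1^2/4$ with the plus sign on the $J_1$-terms, so the invariant-theory ansatz is a harmless repackaging of the paper's direct substitution. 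The resummation identity and its derivation from the resolution of the identity are also exactly right.

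The one real soft spot is the parenthetical ``with the sign arranged to match the convention of Lemma \ref{lem: 2FF computation},'' which papers over a sign you should have pinned down, since the entire content of \eqref{MAIN} is a signed identity feeding a stability argument. Computing literally with your (correct) $Q$, the bracket as displayed in Lemma \ref{lem: 2FF computation} evaluates to
\begin{equation*}
\langle B(\eta_\beta^1,\eta_\beta^1),B(e_i^1,e_i^1)\rangle-2\|B(e_i^1,\eta_\beta^1)\|^2=\lambda_1^2\bigl(\langle J_1e_i^1,\eta_\beta^1\rangle^2-\langle e_i^1,\eta_\beta^1\rangle^2\bigr),
\end{equation*}
the \emph{negative} of your stated per-term value; combined with your resummation this would produce the negative of \eqref{MAIN}. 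The resolution is that the displayed statement of Lemma \ref{lem: 2FF computation} carries a global sign typo: the formula the paper actually uses is Equation \eqref{mainequation}, with bracket $2\|B(e_j^1,\eta_k^1)\|^2-\langle B(\eta_k^1,\eta_k^1),B(e_j^1,e_j^1)\rangle$, and one can confirm this sign independently (e.g.\ for a great circle in $S^2\subset\R^3$, the ambient coordinate sections give $\sum_A-\langle N_{E_A},J_\Sigma(N_{E_A})\rangle=-\lambda^2<0$, matching \eqref{mainequation} and not the displayed Lemma \ref{lem: 2FF computation}). With that corrected sign your per-term value is exactly what the bracket equals, and the tight-frame step then yields \eqref{MAIN} verbatim; so your proof is correct, but the ``arranged'' sign should be justified by this consistency check rather than asserted.
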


\color{black}
The stability of $\Phi$ requires

\begin{equation*}
    \int_\Sigma  -\langle  N_{E_A},J_{\Sigma}(N_{E_A})\rangle \, d\Sigma \ge 0 \quad \text{for each } A=1, \dots, \tilde{m}_1,
\end{equation*}
so
\begin{equation*}\label{ineq: stability, C}
    \int_\Sigma \sum_{A=1}^{\tilde{m}_1} -\langle  N_{E_A},J_{\Sigma}(N_{E_A})\rangle \, d\Sigma \ge 0.
\end{equation*}
On the other hand, combining Lemma \ref{GENERALC} and Proposition \ref{ineq: key result}, we have
\begin{equation*}
\label{sign}
\displaystyle \sum_{A=1}^{\tilde{m}_1} -\langle  N_{E_A},J_{\Sigma}(N_{E_A})\rangle = \lambda_1^{2}  \sum_{i,j=1}^n \langle  J_1(e_j^1), e_i^1\rangle^2 - \langle  e_j^1, e_i^1\rangle^2 \le 0.
\end{equation*}
The above two equations together show that the integrand must be exactly zero, namely,
\begin{equation}\label{eqn: complex, =0, J_1}
\sum_{i,j=1}^n \langle  J_1(e_j^1), e_i^1\rangle^2 - \langle  e_j^1, e_i^1\rangle^2 = 0.
\end{equation}

Now we specialize to the case where $M_2 = \C P^{m_2/2}$, and consider the stable compact minimal immersion $\Phi = (\phi_1, \phi_2) \colon \Sigma \to \C P^{m_1/2} \times \C P^{m_2/2}$ of dimension $n$. For $p \in \Sigma$, let $J_2$ be the complex structure of $T_{\phi_2(p)}\C P^{m_2/2}$. Then letting $\C P^{m_2/2}$ play the role of $\C P^{m_1/2}$ in the above arguments, we have 
\begin{equation}\label{eqn: complex, =0, J_2}
\sum_{i,j=1}^n \langle  J_2(e_j^2), e_i^2\rangle^2 - \langle  e_j^2, e_i^2\rangle^2 = 0.
\end{equation}
as well.

 For $k=1,2$, choose a local orthonormal basis $\{v_i^k\}_{i=1}^{m_k}$ for $T_{\phi_k(p)}\C P^{m_k/2}$ and write $e_j^k$ under this basis as $e_j^k = b^k_{i,j}v_i$. Let $X_k$ denote the $m_k \times n$ matrix $X_k = \begin{bmatrix}b^k_{i,j} \end{bmatrix}_{i,j}$. Then $X_1^TX_1 = \begin{bmatrix}\langle  e_i^1, e_j^1\rangle_{i,j=1}^n \end{bmatrix}$ and $X_2^TX_2 = \begin{bmatrix}\langle  e_i^2, e_j^2\rangle_{i,j=1}^n \end{bmatrix}$,
and since $\{(e_1^1, e_1^2), \dots, (e_n^1, e_n^2)\}$ forms an orthonormal basis of $d\Phi(T_p\Sigma)$ we have 

$$X_1^TX_1 + X_2^TX_2 = I_n.$$

Notice that $X_1^TX_1$ and $X_2^TX_2$ are positive semi-definite since they are Gram matrices. Moreover, if $\mu$ is an eigenvalue of $X_1^TX_1$ with eigenvector $v$, then we have $X_1^TX_1v + X_2^TX_2v = v$, so $X_2^TX_2v = v - X_1^TX_1v = v - \mu v = (1-\mu)v$, showing $(1-\mu)$ is an eigenvalue of $X_2^TX_2$ with eigenvector $v$.\\
Thus if $1 \ge \mu_1 \ge \dots \ge \mu_n \ge 0$ are eigenvalues of $X_1^TX_1$ in non-increasing order with corresponding orthonormal eigenbasis $v_1, \dots, v_n$, then $0 \le 1-\mu_1 \le \dots \le 1-\mu_n \le 1$ are eigenvalues of $X_2^TX_2$ in non-decreasing order with corresponding orthonormal eigenbasis $v_1, \dots, v_n$. In particular, the multiplicity of $1$ as an eigenvalue of $X_1^TX_1$ equals the multiplicity of $0$ as an eigenvalue of $X_2^TX_2$.\\
 
By Proposition \ref{ineq: key result}, Equation (\ref{eqn: complex, =0, J_1}) implies that $X_1X_1^T$ commutes with $J_{1}$, and Equation (\ref{eqn: complex, =0, J_2}) implies that $X_2X_2^T$ commutes with $J_{2}$. Thus the eigenspaces of $X_1X_1^T$ are $J_{1}$-invariant, therefore even dimensional. Thus all the eigenvalues of $X_1X_1^T$ have even multiplicity. By Proposition \ref{linearalgebra}, this shows all the nonzero eigenvalues of $X_1^TX_1$ have even multiplicity. Similarly, all the nonzero eigenvalues of $X_2^TX_2$ have even multiplicity.\\
In particular, $1$ as an eigenvalue of $X_2^TX_2$ has even multiplicity. Using the fact that the multiplicity of $1$ as an eigenvalue of $X_1^TX_1$ equals the multiplicity of $0$ as an eigenvalue of $X_2^TX_2$, we get that the multiplicity of $0$ as an eigenvalue of $X_1^TX_1$ is also even. Thus all the eigenvalues of $X_1^TX_1$ have even multiplicity. However, sum of the multiplicities of the eigenvalues of $X_1^TX_1$ equals its dimension, which is $n$. This shows $n$ must be an even number.

 We have thus proved Theorem \ref{main1}.
\begin{rem}
\label{cc:even}
Notice that by Proposition \ref{ineq: key result}, from Equations (\ref{eqn: complex, =0, J_1}) and (\ref{eqn: complex, =0, J_2}) we can deduce that $\Span\{e^{k}_{1}, \dots, e^{k}_{n}\}$ is $J_{k}$-invariant, for $k=1,2$. However, this is not enough to conclude that the tangent space of the stable submanifold $\Sigma$ behaves well under a complex structure of $\C P^{m_1/2} \times \C P^{m_2/2}$.
\end{rem}

\subsection{Stable minimal submanifolds in \texorpdfstring{$\mathbb{H}P^{\frac{m_{1}}{4}}\times M$}{the product of a quaternionic projective space with any Riemannian manifold}.} 

Equip $\mathbb{H}P^{\frac{m_{1}}{4}}$ with its standard metric as a Riemannian symmetric space, with the
maximum $\lambda^2$ of the sectional curvatures given by $\lambda^2 : = \frac{2m_{1}}{m_1+4}$. 
Let $\Phi=(\phi_1,\phi_2):\Sigma\rightarrow \bar{M}:= \mathbb{H}P^{\frac{m_{1}}{4}}\times M$ be a stable compact minimal immersion of codimension $d$ and dimension $n$, where $M$ is any Riemannian manifold of dimension $m_{2}$.

Let $f_1:\mathbb{H}P^{\frac{m_{1}}{4}}\rightarrow \mathbb{R}^{m}$ be the isotropic embedding induced by the generalized Veronese embedding (see Section 2 of \cite{sakamoto1977planar}). The nice property about this embedding is that, if we let $B$ denote its the second fundamental form, then at each $q\in \mathbb{H}P^{\frac{m_{1}}{4}}$, we have
$$\|B(X, X)\|^2 = \lambda^2 \text{ for any unit vector $X \in T_{q}\mathbb{H}P^{\frac{m_{1}}{4}}$}$$
(see Proposition 3.6 of \cite{ohnita1986stable}).

For each $v\in \mathbb{R}^{m}$, we identify $v$ with $(v, 0) \in T (\mathbb{R}^{m}\times M)$. 
Same as in Section \ref{subsubsec: imm}, we denote by $N_v$  the projection of $v$ onto the normal space $N_{p}\Sigma$ in $T_{\Phi(p)}\bar{M}$. As in the complex case, we are going to use the normal sections $N_v$ in the second variation formula, where $v$ runs over an orthonormal basis of  $\mathbb{R}^{m}$.

Let $p\in \Sigma$. Let $\{e_{1},\ldots,e_{n}\}$
be an orthonormal basis of $d\Phi(T_{p}\Sigma)$, and let $\{\eta_{1},\ldots,\eta_{d}\}$ be an orthonormal basis of $N_{p}\Sigma$. Let $e_j^1$ be the projection of $e_j$ onto $T_{\phi_1(p)}\h P^{m_1/4}$ and $e_j^2$ be the projection of $e_j$ onto $T_{\phi_2(p)}M$. Similarly, let $\eta_j^1$ be the projection of $\eta_j$ onto $T_{\phi_1(p)}\h P^{m_1/4}$ and $\eta_j^2$ be the projection of $\eta_j$ onto $T_{\phi_2(p)}M$.
Let $\{J_1,J_2,J_3\}$ be the quaternionic structure on $T_{\phi_1(p)}\h P^{m_1/4}$. 

\begin{lem}\cite[Lemma~4.1]{Ramirez3}
\label{GENERALH}
Same assumptions as above. Let $\{E_{1},\ldots,E_{m}\}$ be the usual canonical basis of $\mathbb{R}^{m}$. Then, for $s\in \{1,2,3\}$

\begin{equation}
    \label{hMAIN2} \sum_{A=1}^{m} -\langle N_{E_{A}},J_{\Sigma}(N_{E_{A}})\rangle = \lambda^{2}\Big(-\sum_{
         k\neq s}^{3}\sum_{j=1}^{n}\sum_{\beta=1}^{d} \langle  J_{k}(\eta^{1}_{\beta}),e^{1}_{j}\rangle^{2}
+\sum_{i,j=1}^{n}(\langle  J_{s}(e^{1}_{i}),e_{j}^{1}\rangle^{2}-\langle  e^{1}_{i},e_{j}^{1}\rangle^{2})\Big).
\end{equation}
\end{lem}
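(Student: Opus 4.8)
The plan is to turn the statement into a pointwise linear-algebra identity on $T_{\phi_1(p)}\h P^{m_1/4}$ by feeding the second fundamental form $B$ of the isotropic embedding $f_1$ into Lemma~\ref{lem: 2FF computation}, which here reads
$$\sum_{A=1}^{m} -\langle N_{E_A},J_{\Sigma}(N_{E_A})\rangle = \sum_{i=1}^n\sum_{\beta=1}^d\Big(\langle B(\eta_\beta^1,\eta_\beta^1),B(e_i^1,e_i^1)\rangle - 2\|B(e_i^1,\eta_\beta^1)\|^2\Big).$$
Since $f_1$ is the first standard minimal immersion of the symmetric space $\h P^{m_1/4}$, its second fundamental form is parallel, so $\langle B(X,Y),B(Z,W)\rangle$ is a pointwise $\mathrm{Sp}(m_1/4)\mathrm{Sp}(1)$-invariant form built from the metric and the local quaternionic structure $\{J_1,J_2,J_3\}$. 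Imposing the required symmetries ($X\leftrightarrow Y$, $Z\leftrightarrow W$, and $(X,Y)\leftrightarrow(Z,W)$), it must be a combination
$$\langle B(X,Y),B(Z,W)\rangle = a_0\langle X,Y\rangle\langle Z,W\rangle + a_1\big(\langle X,Z\rangle\langle Y,W\rangle + \langle X,W\rangle\langle Y,Z\rangle\big) + b\sum_{k=1}^3\big(\langle J_kX,Z\rangle\langle J_kY,W\rangle + \langle J_kX,W\rangle\langle J_kY,Z\rangle\big),$$
and the coefficients are determined by the isotropy relation $\|B(X,X)\|^2=\lambda^2$ (giving $a_0+2a_1=\lambda^2$) together with the Gauss equation matched against the quaternionic space-form curvature of $\h P^{m_1/4}$; one gets $a_0=\lambda^2/2$ and $a_1=b=\lambda^2/4$.

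Next I would substitute this formula into the reduction above. Expanding the two terms and repeatedly using $\langle J_kX,X\rangle=0$ and $\langle J_ku,v\rangle=-\langle u,J_kv\rangle$, each summand collapses to
$$\langle B(\eta_\beta^1,\eta_\beta^1),B(e_i^1,e_i^1)\rangle - 2\|B(e_i^1,\eta_\beta^1)\|^2 = (a_0-2a_1)\,|\eta_\beta^1|^2|e_i^1|^2 - 2a_0\langle\eta_\beta^1,e_i^1\rangle^2 + 4b\sum_{k=1}^3\langle J_k\eta_\beta^1,e_i^1\rangle^2.$$
The decisive point is that $a_0-2a_1=0$, so the quartic-length term disappears, and with $a_0=\lambda^2/2$, $b=\lambda^2/4$ the whole expression (with the sign convention of Lemma~\ref{lem: 2FF computation}) becomes the manifestly $k$-symmetric quantity
$$\lambda^2\sum_{i=1}^n\sum_{\beta=1}^d\Big(\langle\eta_\beta^1,e_i^1\rangle^2 - \sum_{k=1}^3\langle J_k\eta_\beta^1,e_i^1\rangle^2\Big).$$

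Finally I would eliminate the normal projections $\eta_\beta^1$ in favor of the tangential ones $e_i^1$ using the completeness relation afforded by orthonormality of $\{e_1,\dots,e_n,\eta_1,\dots,\eta_d\}$ in $T\h P^{m_1/4}\oplus TM$: for all $u,w$ tangent to $\h P^{m_1/4}$, $\sum_i\langle u,e_i^1\rangle\langle w,e_i^1\rangle + \sum_\beta\langle u,\eta_\beta^1\rangle\langle w,\eta_\beta^1\rangle = \langle u,w\rangle$. Taking $u=w=e_i^1$ and $u=w=J_se_i^1$ yields $\sum_\beta\langle\eta_\beta^1,e_i^1\rangle^2 = |e_i^1|^2 - \sum_j\langle e_i^1,e_j^1\rangle^2$ and $\sum_\beta\langle J_s\eta_\beta^1,e_i^1\rangle^2 = |e_i^1|^2 - \sum_j\langle J_se_i^1,e_j^1\rangle^2$. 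Substituting these two identities into the $\langle\eta,e\rangle^2$ term and the $k=s$ term of the symmetric quantity, the $|e_i^1|^2$ contributions cancel and those two terms become $\sum_{i,j}\big(\langle J_se_i^1,e_j^1\rangle^2 - \langle e_i^1,e_j^1\rangle^2\big)$, while the two remaining terms stay as $-\sum_{k\neq s}\sum_{j,\beta}\langle J_k\eta_\beta^1,e_j^1\rangle^2$; this is exactly the claimed right-hand side. Because the quantity in the previous display is symmetric in $k\in\{1,2,3\}$, any $s$ may be singled out in this last step, which is precisely why the identity holds for every $s$.

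The main obstacle is the middle step: producing the polarized second fundamental form with the correct coefficients (which forces one to combine the isotropy and parallelism of $f_1$ with the Gauss equation and the quaternionic K\"ahler curvature tensor), and then carrying out the sign bookkeeping through the three anticommuting structures $J_k$ carefully enough to see the length-four term vanish. Once the $k$-symmetric form is in hand, the completeness relation together with the symmetry in $k$ make the rest, including the $s$-independence, essentially automatic.
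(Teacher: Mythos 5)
Your proposal is correct in substance and follows essentially the route the paper itself takes for the octonionic analogue, Lemma \ref{GENERALO}: feed the second fundamental form of the parallel isotropic (generalized Veronese) embedding into the Chen--Wang second-variation sum, then eliminate the normal projections $\eta_\beta^1$ via the completeness relation. Your polarized formula is right: plugging the quaternionic space-form curvature tensor into Equation (\ref{eqn: 2FF}) directly yields $\langle B(X,Y),B(Z,W)\rangle = \frac{\lambda^2}{2}\langle X,Y\rangle\langle Z,W\rangle + \frac{\lambda^2}{4}\big(\langle X,Z\rangle\langle Y,W\rangle + \langle X,W\rangle\langle Y,Z\rangle\big) + \frac{\lambda^2}{4}\sum_k\big(\langle J_kX,Z\rangle\langle J_kY,W\rangle + \langle J_kX,W\rangle\langle J_kY,Z\rangle\big)$, confirming $a_0=\lambda^2/2$, $a_1=b=\lambda^2/4$; so your invariance ansatz, though stated somewhat loosely, is anchored rigorously, and the cancellation $a_0-2a_1=0$ together with the completeness identities and the $s$-independence observation is exactly right. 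The one concrete issue is a sign flip between your two middle displays: from your (correctly computed) collapsed summand $-2a_0\langle\eta_\beta^1,e_i^1\rangle^2 + 4b\sum_k\langle J_k\eta_\beta^1,e_i^1\rangle^2$ one obtains $\lambda^2\sum_{i,\beta}\big(\sum_k\langle J_k\eta_\beta^1,e_i^1\rangle^2 - \langle\eta_\beta^1,e_i^1\rangle^2\big)$, which is the \emph{negative} of the ``$k$-symmetric quantity'' you then write down and proceed from. The flip is not your error in the end but an artifact of the paper: Lemma \ref{lem: 2FF computation} as printed disagrees in sign with the paper's own Equation (\ref{mainequation}) in the proof of Lemma \ref{GENERALO}, and only the convention $\sum_{i,\beta}\big(2\|B(e_i^1,\eta_\beta^1)\|^2 - \langle B(\eta_\beta^1,\eta_\beta^1),B(e_i^1,e_i^1)\rangle\big)$ is consistent with Lemmas \ref{GENERALC} and \ref{GENERALH} and with the stability argument, which needs the pointwise quantity to be nonpositive. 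You should state this explicitly rather than flip silently: with the corrected sign, your summand becomes $\lambda^2\big(\langle\eta_\beta^1,e_i^1\rangle^2 - \sum_k\langle J_k\eta_\beta^1,e_i^1\rangle^2\big)$ and everything downstream, including the derivation of (\ref{hMAIN2}) for every $s\in\{1,2,3\}$, goes through verbatim.
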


The stability of $\Phi$ requires
\begin{equation*}
    \int_\Sigma -\langle N_{E_{A}},J_{\Sigma}(N_{E_{A}})\rangle \, d\Sigma \ge 0 \quad \text{for each } A=1, \dots, m,
\end{equation*}
so
\begin{equation}\label{ineq: stability, H}
    \int_\Sigma \sum_{A=1}^{m} -\langle N_{E_{A}},J_{\Sigma}(N_{E_{A}})\rangle \, d\Sigma \ge 0.
\end{equation} 
On the other hand, combining Lemma \ref{GENERALH} and Proposition \ref{ineq: key result}, we have for $s=1,2,3$

\begin{equation}
\label{sign2}
\begin{aligned}
 & \sum_{A=1}^{m} -\langle N_{E_{A}},J_{\Sigma}(N_{E_{A}})\rangle \\
= & \, \lambda^{2}\Big(-\sum_{
         k\neq s}^{3}\sum_{j=1}^{n}\sum_{\beta=1}^{d} \langle  J_{k}(\eta^{1}_{\beta}),e^{1}_{j}\rangle^{2}
+\sum_{i,j=1}^{n}(\langle  J_{s}(e^{1}_{i}),e_{j}^{1}\rangle^{2}-\langle  e^{1}_{i},e_{j}^{1}\rangle^{2})\Big) \\
\le &  \, \lambda^{2} \sum_{i,j=1}^{n} (\langle  J_{s}(e^{1}_{i}),e_{j}^{1}\rangle^{2}-\langle  e^{1}_{i},e_{j}^{1}\rangle^{2}) \\
\le & \, 0.
\end{aligned}
\end{equation}

Equations (\ref{ineq: stability, H}) and (\ref{sign2}) 
together show that both inequalities in (\ref{sign2}) are equalities. Hence for $s=1,2,3$,
\begin{equation}
\label{quaternionic2}
\sum_{i,j=1}^n \langle  J_{s}(e_j^1), e_i^1\rangle^2 - \langle  e_j^1, e_i^1\rangle^2 = 0,
\end{equation}
and consequently
\begin{equation}
\label{quaternionic}
 -\sum_{k\neq s}^{3}\sum_{j=1}^{n}\sum_{\beta=1}^{d} \langle  J_{k}(\eta^{1}_{\beta}),e^{1}_{j} \rangle^{2} = 0.
\end{equation}

From Equation (\ref{quaternionic}) we have that for $t\in \{1,2,3\}$,  $\beta\in \{1,\dots,d\}$, and $j\in \{1,\dots,n\}$
\begin{equation*}
\langle  J_t(\eta_\beta ^1), e_j^1 \rangle=0.
\end{equation*}
Thus for $t, \beta$ fixed, $J_t(\eta_\beta ^1) \perp \Span\{e_l^1\}_l$. By Proposition \ref{ineq: key result}, Equation (\ref{quaternionic2}) also implies that $\Span\{e_l^1\}_l$ is $J_t$-invariant, so $J_t(e_j^1) \in \Span\{e_l^1\}_l$ for each $j$. Thus 
\begin{equation}
\label{E1}
\langle  \eta_\beta ^1, e_j^1 \rangle = \langle  J_t(\eta_\beta ^1), J_t(e_j^1) \rangle = 0.
\end{equation}
But since we have $0 = \langle  \eta_\beta, e_j \rangle = \langle  \eta_\beta ^1, e_j^1 \rangle + \langle  \eta_\beta ^2, e_j^2 \rangle$, Equation (\ref{E1}) shows that we necessarily have
\begin{equation}
\label{E2}
    \langle  \eta_\beta ^2, e_j^2 \rangle =  0
\end{equation}
as well.\\

For the product manifold $\h P^{m_1/4} \times M$, let 
$$P: T(\h P^{m_1/4}\times M)  \to T\h P^{m_1/4}  \oplus 0 \subseteq T(\h P^{m_1/4}\times M),$$
$$Q: T(\h P^{m_1/4}\times M)\to 0 \oplus TM \subseteq T(\h P^{m_1/4}\times M)$$
denote the projection maps. Let $F = P-Q$, then $F^2 = I$. For $p \in \Sigma$, we claim that 
$F( d\Phi(T_{p}{\Sigma})) \subseteq d\Phi(T_{p}{\Sigma})$.
In fact, since $F(e_i) = F(e_i^1, e_i^2) =  (e_i^1, -e_i^2)$, for each $\eta_\beta$, we have
$$
 \langle  F(e_i), \eta_\beta\rangle   = \langle  e_i^1, \eta_\beta^1\rangle -  \langle  e_i^2, \eta_\beta^2\rangle = 0,
$$
where we have used Equations (\ref{E1}) and (\ref{E2}) in the second equality. Therefore, $F(e_i) \perp (d\Phi(T_p\Sigma))^\perp$. This shows $F(e_i) \subset d\Phi(T_p\Sigma)$, so $F( d\Phi(T_{p}{\Sigma})) \subseteq d\Phi(T_{p}{\Sigma})$. Thus $\Sigma$ is an invariant submanifold of $\h P^{m_1/4} \times M$.

By Theorem 1 in \cite{xu2000submanifolds}, $\Sigma$ is isometric to a product manifold $\Sigma_1 \times \Sigma_2$, where $M_1$ is an immersion into $\h P^{m_1/4}$ and $\Sigma_2$ is an immersion into $M$. Since $\Phi:\Sigma \to \h P^{m_1/4} \times M$ is a stable compact minimal immersion, we further have that $\Sigma_1$ and $\Sigma_2$ are stable compact minimal immersions into $\h P^{m_1/4}$ and $M$, respectively. However, Ohnita \cite[Theorem~D]{ohnita1986stable} showed that the only stable minimal immersed submanifolds of a quaternionic protective subspace are precisely the quaternionic protective subspaces. Using this result, we have thus established Theorem \ref{main2}.

 \begin{rem}
 Notice that the diagonal map $\Phi:\h P^{m/4} \to \h P^{m/4} \times \h P^{m/4}, x\mapsto (x,x)$ is not stable.
 \end{rem}

\subsection{Stable minimal submanifolds in \texorpdfstring{$\mathbb{O}P^2\times M$}{the product of the octonionic projective plane with any Riemannian manifold}.}

Let $\mathbb{O}P^2$ denote the octonionic (Cayley) projective plane, endowed with the standard metric as a Riemannian symmetric space. Classically, $\mathbb{O}P^2$ can be seen as a 16-dimensional quotient manifold $F_4/\Spin(9)$, where the isometry group of $\O P^2$ is isomophic to the exceptional Lie group $F_4$ and the isotropy group at any point $q \in \O P^2$ is isomorphic to $\Spin(9)$.

Alternatively, we can also define $\O P^2$ in terms of equivalence classes (cf. \cite{held2009semi}). Consider the relation $\sim$ on $\O^3$, where $[a, b, c] \sim [d, e, f]$ if and only if there exists $u \in \O \setminus \{0\}$ such that
$a = du, b = eu, c = fu$. This relation is symmetric and reflexive. However, it is not necessarily transitive due to non-associativity of octonions. As a remedy, we instead consider the following subsets of $\O^3$:
$$U_1 := \{1\} \times \O \times \O, \quad U_2 := \O \times \{1\}  \times \O, \quad U_3 := \O   \times \O \times \{1\},$$
and their union $\O^3_\bullet := U_1 \cup U_2 \cup U_3$. The relation $\sim$ on $\O^3_\bullet$ is then an equivalence relation, and we can define the octonionic projective plane $\O P^2$ as the set of equivalence classes of $\O^3_\bullet$ by $\sim$. That is, 
$$\O P^2 := \O^3_\bullet / \sim.$$

\subsubsection{Basic octonion algebra}
First we review some basic octonion algebra. We refer the reader to \cite{brown1972riemannian}, \cite{held2009semi}, and \cite{kotrbaty2020integral} for details. Let $I_0=1, I_1, \dots, I_7$ denote the usual octonion basis.
For $a = a_0 + \sum_{s=1}^7 a_s I_s \in \O$, $a_i \in \R$, let
\begin{center}
$a^* : = a_0 - \sum_{s=1}^7 a_s I_s$
\end{center}
 denote its conjugate. Then the inner product on $\O$ is given by 
$$\langle a, b \rangle = \frac{1}{2}(a^*b + b^*a) = \frac{1}{2}(ba^* + ab^*).$$

Moreover, for $a,b,c,d, x,y \in \O$, we have the identities
\begin{align} 
\label{id: octonion1}\langle a x, y\rangle &= \langle x, a^*y \rangle, \\   
\label{id: octonion2} \langle xa, y\rangle &= \langle x, ya^* \rangle, \\
\label{id: octonion3} \langle ab, cd \rangle + \langle ad, cb \rangle &= 2 \langle a, c \rangle\langle b, d \rangle.
\end{align}

The 16-dimensional real vector space $\O \oplus \O$ decomposes into octonionic lines of the form
$$\ell_m := \{(u,mu) \mid u \in \O\} \quad \text{or} \quad \ell_\infty := \{(0,u) \mid u \in \O\},$$
that intersect each other only at the origin $(0, 0) \in \O \oplus \O$. Each line is an 8-dimensional real vector space. Here $m \in \O \cup \{\infty\} \cong  \O P^1 \cong S^8 $ parametrizes the set of octonionic lines. 
Be cautious here that the octonionic line through $(0,0)$ and $(a,b)$ is \emph{not} the set $\{(au,bu) \mid u \in \O\}$ due to the non-associativity of octonion multiplication; the correct line is given by $\ell_{ba^{-1}}$.

\begin{prop}[see e.g.,{\cite[Section~2.2]{kotrbaty2020integral}}]\label{prop: Spin(9)}
Here we collect the properties of $\Spin(9)$ that we will use later.
 \begin{itemize}
 \item The group $\Spin(9)$ acts transitively and effectively on the unit sphere $S^{15} \subset \O \oplus \O$, and we can view $\Spin(9) \subset SO(16)$ as a matrix group acting on $\O \oplus \O$.
 \item $\Spin(9)$ maps octonionic lines to octonionic lines.
 \item The setwise stabilizer of an octonionic line is isomorphic to $\Spin(8)$, and the stabilizer of a point is isomorphic to $\Spin(7)$.
 \end{itemize}   
\end{prop}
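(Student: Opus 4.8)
The three statements are classical facts about the $16$-dimensional real spin representation of $\Spin(9)$; the plan is to exhibit this representation explicitly on $\O\oplus\O$ and read off each property from the octonion identities (\ref{id: octonion1})--(\ref{id: octonion3}). For $v=(t,a)\in\R\oplus\O\cong\R^9$, define $P_v\in\operatorname{End}(\O\oplus\O)$ by $P_v(x,y)=(tx+ya^*,\,xa-ty)$, i.e. the block operator with diagonal blocks $\pm t\,\mathrm{Id}$ and off-diagonal right-multiplication blocks $R_{a^*}$ and $R_a$, where $R_a(x)=xa$. Identity (\ref{id: octonion2}) shows $R_a^*=R_{a^*}$, so each $P_v$ is symmetric, and the alternative law for $\O$ (which gives $(xa)a^*=|a|^2x$) yields $P_v^2=|v|^2\,\mathrm{Id}$. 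Thus $v\mapsto P_v$ is a representation of the Clifford algebra of $(\R^9,|\cdot|^2)$, and the subgroup generated by products $P_{v_1}\cdots P_{v_{2k}}$ of an even number of unit vectors is a copy of $\Spin(9)$; since each $P_v$ has trace $0$ and eigenvalues $\pm1$ it lies in $SO(16)$, giving $\Spin(9)\subset SO(16)$. The representation is faithful (the central element $-1$ acts as $-\mathrm{Id}$), hence the action on $S^{15}$ is effective.

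For transitivity on $S^{15}$, I would show the orbit of the unit vector $\xi_0=(1,0)$ is open by computing the infinitesimal action of $\mathfrak{spin}(9)=\Span\{[P_v,P_w]\}$ at $\xi_0$. A direct computation gives $\tfrac12[P_{(1,0)},P_{(0,a)}]\xi_0=(0,-a)$ for $a\in\O$ and $\tfrac12[P_{(0,1)},P_{(0,b)}]\xi_0=(b,0)$ for $b\in\mathrm{Im}\,\O$, so the infinitesimal orbit already fills the tangent space $T_{\xi_0}S^{15}=\mathrm{Im}\,\O\oplus\O$. Hence the $\Spin(9)$-orbit of $\xi_0$ is open; as $\Spin(9)$ is compact the orbit is also closed, and since $S^{15}$ is connected the orbit is all of $S^{15}$. (Alternatively, this is one entry in the classification of transitive compact-group actions on spheres, with fibration $\Spin(7)\hookrightarrow\Spin(9)\to S^{15}$.)

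The last two statements come from the octonionic Hopf fibration. Define $h\colon S^{15}\to S^8\subset\R^9$ by $h(\xi)=(\langle P_{e_0}\xi,\xi\rangle,\dots,\langle P_{e_8}\xi,\xi\rangle)$ for an orthonormal basis $e_0,\dots,e_8$ of $\R^9$; a Clifford computation shows $|h(\xi)|=|\xi|^2$, and a check on the lines (e.g. $h$ is constant on the unit sphere of $\ell_\infty$) shows the fibers of $h$ are exactly the unit spheres of the octonionic lines $\ell_m,\ell_\infty$. The defining twisted-adjoint property $gP_vg^{-1}=P_{\rho(g)v}$ of the covering $\rho\colon\Spin(9)\to SO(9)$ gives $h(g\xi)=\rho(g)h(\xi)$, so $\Spin(9)$ carries octonionic lines to octonionic lines and induces the standard transitive $SO(9)$-action on the base $S^8\cong\O P^1$. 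Consequently the setwise stabilizer of a line is $\rho^{-1}(SO(8))=\Spin(8)$. For the stabilizer of a point $\xi\in S^{15}$, a dimension count gives $\dim\Spin(9)-\dim S^{15}=36-15=21=\dim\Spin(7)$; this stabilizer lies inside the line-stabilizer $\Spin(8)$ acting on the fiber $S^7$ as a spin representation, and is therefore the subgroup of $\Spin(8)$ fixing a unit spinor, which is $\Spin(7)$ by triality.

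The two points requiring care are the non-associativity of $\O$, which is exactly what forces the systematic use of the alternative/Moufang laws (together with (\ref{id: octonion1})--(\ref{id: octonion3})) to verify the Clifford relations $P_v^2=|v|^2\,\mathrm{Id}$ and the well-definedness of $h$, and the triality identification of the point stabilizer as $\Spin(7)$, which is the most delicate step. Since all three items are standard, an equally acceptable route is simply to cite the explicit construction of $\Spin(9)$ on $\O\oplus\O$ and of the octonionic Hopf fibration from the references, as the paper does.
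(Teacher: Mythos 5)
The paper does not actually prove this proposition: it is stated as a list of classical facts with the pointer to \cite[Section~2.2]{kotrbaty2020integral} and used later as a black box. Your proposal therefore does strictly more than the paper, and the route you take is the standard one in that literature: realize the $16$-dimensional spin representation by nine symmetric endomorphisms of $\O\oplus\O$ satisfying the Clifford relations. I checked the key steps. The relation $P_v^2=|v|^2\Id$ does follow from the alternative law (write $a^*=2\langle a,1\rangle-a$ and use $(xa)a=xa^2$); your two bracket evaluations at $\xi_0=(1,0)$ are correct and fill the $15$-dimensional tangent space $\im\O\oplus\O$, so the open-plus-closed-plus-connected argument gives transitivity; effectiveness follows as you say, since the kernel of the representation is a normal, hence central, subgroup of $\Spin(9)$ and $-1\mapsto-\Id$; and the equivariance $h(g\xi)=\rho(g)h(\xi)$ correctly reduces both stabilizer statements to the base $S^8$ and the fiber. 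Compared with the bare citation, your construction buys a self-contained verification of everything except the one genuinely delicate input: the triality identification of the spinor stabilizer inside $\Spin(8)$ as $\Spin(7)$. Your dimension count $36-15=21$ is consistent but does not by itself determine the group, so that step remains imported --- which is acceptable, since the paper imports the entire proposition.

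One caveat you should fix if this sketch were written out: with your operators $P_{(t,a)}(x,y)=(tx+ya^*,\,xa-ty)$, the octonionic part of $h$ is $2\langle xa,y\rangle=2\langle a,x^*y\rangle$, i.e.\ $h(x,y)=(|x|^2-|y|^2,\,2x^*y)$, and $x^*y=u^*(mu)$ is \emph{not} constant on the paper's line $\ell_m=\{(u,mu)\}$ --- already in the quaternions $u^*mu$ sweeps out conjugates of $m$ as $u$ varies. As written, your $h$ is instead constant on the sets $\{(u,um)\}$, since $u^*(um)=|u|^2m$ by Artin's theorem, so your copy of $\Spin(9)$ preserves the opposite family of lines. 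The fix is cosmetic: use left multiplications, $P_{(t,a)}(x,y)=(tx+a^*y,\,ax-ty)$, which are still symmetric by identity (\ref{id: octonion1}) and still square to $|v|^2\Id$; then the $\O$-part of $h$ becomes $2yx^*$, which on $\ell_m$ equals $2m|u|^2$, so $h$ sends the unit sphere of $\ell_m$ to the point $(1-|m|^2,\,2m)/(1+|m|^2)$ --- distinct for distinct $m\in\O\cup\{\infty\}$, so the fibers are exactly the unit spheres of the paper's octonionic lines. (Equivalently, conjugate your model by $(x,y)\mapsto(x^*,y^*)$.) After this adjustment all your computations go through essentially verbatim.
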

 
Define the map $L: \O \oplus \O \setminus 0 \to \O P^1$ by mapping $x \in \O \oplus \O$ to the unique octonionic line $L(x)$ that contains $x$. 
The restriction of the map $L$ to $S^{15} \subset \O \oplus \O$ defines the \emph{octonionic Hopf fibration} 
$$S^{15} \to S^8,  \quad \text{or as homogeneous fibration} \quad \Spin(9)/\Spin(7) \to \Spin(9)/\Spin(8)$$
with $S^7 \cong \Spin(8)/\Spin(7)$ as the fiber. $\Spin(9)$ is the symmetry group of this fibration. (cf. \cite{ornea2013spin}) 

\begin{lem}\label{lem: octo line inner product}
Let $\ell_{m_1}, \ell_{m_2}$, $m_1, m_2 \in\O \cup \{\infty\}$ be two octonionic lines defined as above. Let $x_1, \dots, x_8$ be an orthonormal basis of $\ell_{m_1}$ and $y_1, \dots, y_8$ be an orthonormal basis of $\ell_{m_2}$. 
Let $X$, $Y$ be the associated $16 \times 8$ matrices
$$X = \begin{bmatrix}x_1 & \dots & x_8 \end{bmatrix}, \quad Y = \begin{bmatrix}y_1 & \dots & y_8 \end{bmatrix}.$$ 
Then $X^TY  =  [\langle x_i, y_j \rangle]_{i,j=1}^8 = cQ$, where $Q$ is an $8 \times 8$ orthogonal matrix, and $0 \le c \le 1$ is some constant. We have $c = 0$ if and only if $\ell_{m_1} \perp \ell_{m_2}$, and $c = 1$ if and only if $m_1 = m_2$.
\end{lem}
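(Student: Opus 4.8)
The plan is to show that all eight singular values of the matrix $X^TY$ coincide, since an $8\times 8$ real matrix whose singular values all equal a common number $c\ge 0$ is precisely a matrix of the form $cQ$ with $Q\in O(8)$. Equivalently, I would prove that $(X^TY)^T(X^TY)=Y^T(XX^T)Y=c^2 I_8$. The point of this reformulation is that $XX^T$ is the orthogonal projection of $\O\oplus\O$ onto $\ell_{m_1}$ and is therefore independent of the chosen orthonormal basis $x_1,\dots,x_8$; moreover, replacing $y_1,\dots,y_8$ by another orthonormal basis of $\ell_{m_2}$ conjugates $Y^T(XX^T)Y$ by an element of $O(8)$, which changes neither the property of being a scalar matrix nor the scalar. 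Thus ``the singular values of $X^TY$ all equal $c$'' is an intrinsic property of the ordered pair of lines $(\ell_{m_1},\ell_{m_2})$, and $c$ is an invariant of that pair.

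The second step is to exploit the $\Spin(9)$-symmetry to reduce to a single normal form. Since $\Spin(9)\subset SO(16)$ acts on $\O\oplus\O$ by isometries, carries octonionic lines to octonionic lines, and acts transitively on $S^{15}$ (Proposition \ref{prop: Spin(9)}), it acts transitively on the set of octonionic lines; and because it acts orthogonally, replacing $(\ell_{m_1},\ell_{m_2})$ by $(g\,\ell_{m_1},g\,\ell_{m_2})$ with bases $(gX,gY)$ leaves $Y^T(XX^T)Y$ unchanged, since $(gY)^T(gX)(gX)^T(gY)=Y^T(XX^T)Y$. I would therefore choose $g\in\Spin(9)$ sending $\ell_{m_1}$ to the standard line $\ell_0=\O\oplus 0$. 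Its image $g\,\ell_{m_2}$ is again an octonionic line, so by the classification of lines it equals $\ell_m$ for some $m\in\O$, or equals $\ell_\infty=0\oplus\O$. Hence it suffices to verify the lemma for the two representative pairs $(\ell_0,\ell_m)$ and $(\ell_0,\ell_\infty)$.

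For these representatives the computation is immediate. Fix an orthonormal basis $u_1,\dots,u_8$ of $\O$; identity (\ref{id: octonion3}) gives $\langle mu_i,mu_j\rangle=|m|^2\langle u_i,u_j\rangle$, so $\{(u_i,0)\}_i$ and $\{(1+|m|^2)^{-1/2}(u_i,mu_i)\}_i$ are orthonormal bases of $\ell_0$ and $\ell_m$. Then $\langle (u_i,0),(u_j,mu_j)\rangle=\langle u_i,u_j\rangle=\delta_{ij}$, whence $X^TY=(1+|m|^2)^{-1/2}I_8$ and the common singular value is $c=(1+|m|^2)^{-1/2}\in(0,1]$; for $\ell_\infty\perp\ell_0$ one gets $X^TY=0$, i.e.\ $c=0$. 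Because the singular values are intrinsic, the original matrix $X^TY$ has all singular values equal to this $c$, so $X^TY=cQ$ for some $Q\in O(8)$. Transporting the equality cases back by $g$: $c=0$ exactly when $g\,\ell_{m_2}=\ell_\infty=\ell_0^\perp$, i.e.\ when $\ell_{m_1}\perp\ell_{m_2}$; and $c=1$ exactly when $m=0$, i.e.\ $g\,\ell_{m_2}=\ell_0=g\,\ell_{m_1}$, i.e.\ $\ell_{m_1}=\ell_{m_2}$ (equivalently $m_1=m_2$), with $0\le c\le 1$ throughout.

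The main thing to get right is the legitimacy of the reduction rather than any hard calculation: one must be sure that $c$ is genuinely independent both of the orthonormal bases and of the simultaneous $\Spin(9)$-action, which is exactly why I would phrase everything through the basis-free projection $XX^T$ and the eigenvalues of $Y^T(XX^T)Y$ instead of through $X^TY$ directly, and then track carefully that $c=0$ and $c=1$ correspond respectively to perpendicular and coincident lines after applying $g^{-1}$. As an alternative that bypasses the symmetry reduction, one can study the linear map $u\mapsto m_1^*(m_2u)$ on $\O$ (whose matrix in the basis $\{u_i\}$, after normalization, is essentially $X^TY$ when $m_1,m_2\in\O$) and show directly, using (\ref{id: octonion3}) together with the alternative law $a(a^*w)=|a|^2w$, that $\id+(u\mapsto m_1^*(m_2u))$ is conformal with dilation $\sqrt{1+2\langle m_1,m_2\rangle+|m_1|^2|m_2|^2}$, reproducing $c=\big(\tfrac{1+2\langle m_1,m_2\rangle+|m_1|^2|m_2|^2}{(1+|m_1|^2)(1+|m_2|^2)}\big)^{1/2}$; this is more explicit but needs an octonion identity beyond (\ref{id: octonion1})--(\ref{id: octonion3}) and a separate treatment of $\ell_\infty$.
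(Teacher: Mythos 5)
Your proof is correct and follows essentially the same route as the paper: reduce via the $\Spin(9)$-action to $\ell_{m_1}=\ell_0$, treat $\ell_\infty$ separately, and compute with the standard bases $(I_s,0)$ and $(1+|m|^2)^{-1/2}(I_s,mI_s)$, the paper handling general bases by writing $X=\tilde X Q_1$, $Y=\tilde Y Q_2$ where you instead phrase the same basis-independence through the singular values of $X^TY$ via $Y^T(XX^T)Y$. Your version makes the invariance under the simultaneous $\Spin(9)$-action and under change of orthonormal bases more explicit than the paper does (and your equality-case bookkeeping even avoids the paper's typo ``$c=0$ precisely when $m_2=0$,'' which should read $c=1$), but the substance is the same.
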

\begin{proof}
By Proposition \ref{prop: Spin(9)}, by an action of $\Spin(9)$ we can assume that $m_1 = 0$ and consequently $\ell_{m_1} = \O \oplus 0$. For $m_2 =\infty$, since $\ell_0, \ell_\infty$ are orthogonal complements, we have $X^TY = 0 = cQ$, for $c=0$ and $Q = \Id_8$. We can thus assume $m_2  \in \O$.

Let $\tilde{x}_s = (I_{s-1}, 0)$, $s=1,\dots, 8$, where $I_0=1, I_1, \dots, I_7$ are the usual octonion basis. Then $\tilde{x}_1, \dots, \tilde{x}_8$ form an orthogonal basis for the 8-dimensional vector space $\ell_0$. Let 
$$\tilde{X} = \begin{bmatrix}\tilde{x}_1 & \dots & \tilde{x}_8 \end{bmatrix}.$$
Therefore, there exists an $8 \times 8$ orthogonal matrix $Q_1$ such that $X = \tilde{X}Q_1$.

Let $\tilde{y}_s = \frac{1}{\sqrt{1+|m_2|^2}}(I_{s-1}, m_2I_{s-1})$, $s=1,\dots, 8$. Then $\tilde{y}_1, \dots, \tilde{y}_8$ form an orthogonal basis for the 8-dimensional vector space $\ell_{m_2}$. Let 
$$\tilde{Y} = \begin{bmatrix}\tilde{y}_1 & \dots & \tilde{y}_8 \end{bmatrix}.$$
Therefore, there exists an $8 \times 8$ orthogonal matrix $Q_2$ such that $Y = \tilde{Y}Q_2$.

It is clear to see that $$\tilde{X}^T\tilde{Y} = [\langle \tilde{x}_i, \tilde{y}_j \rangle]_{i,j=1}^n = \frac{1}{\sqrt{1+|m_2|^2}}\Id_8.$$
Thus
$$X^TY = Q_1^T \tilde{X}^T\tilde{Y}Q_2 = Q_1^T(\frac{1}{\sqrt{1+|m_2|^2}}\Id_8)Q_2 = \frac{1}{\sqrt{1+|m_2|^2}}(Q_1^TQ_2),$$ 
where $Q: = Q_1^TQ_2$ is an $8 \times 8$ orthogonal matrix, and $c = \frac{1}{\sqrt{1+|m_2|^2}} \in (0,1]$. We have $c = 0$ precisely when $m_2=0$ and consequently $\ell_{m_1} = \ell_{m_2}$.

\end{proof}

\subsubsection{Geometry of $\O P^2$}
Here we review the basic Riemannian geometry of $\O P^2$. We refer the reader to \cite{brown1972riemannian}, \cite{ohnita1986stable}, and \cite{held2009semi} for details. In each of the chart $U_1, U_2, U_3$ defined at the beginning of this subsection, let $(u,v)$ with $u,v \in \O$ denote the coordinate functions on the chart. Then they give rise to octonion valued
1-forms $du$ and $dv$. The standard metric of $\O P^2$ is then given by (see Equation (3.1) in \cite{held2009semi})
$$ ds^2 = \frac{4}{\lambda^2}\frac{|du|^2(1 + |v|^2) + |dv|^2(1 + |u|^2) - 2\Re[(u v^*)(dvdu^*)]}{
(1 + |u|^2 + |v|^2)^2},$$
where $\lambda^2$ is a scaling factor that denotes the maximum of the sectional curvatures of $\mathbb{O}P^{2}$. Notice that this expression resembles the expression of the Fubini–Study metric of a complex projective plane.

Let $R$ denote the Riemannian curvature tensor of $\mathbb{O}P^{2}$. Let $q$ be a point in $\mathbb{O}P^{2}$. We can identify $T_q(\mathbb{O}P^{2})$ with $\O \oplus \O$ in a natural manner. Using the structure of the octonionic algebra, the curvature tensor $R$ is given by 

\begin{align}
\langle  R\Big((a,b),(c,d)\Big)(e,f), (g,h)\rangle 
  = \frac{\lambda^2}{4}\bigg( &-4\langle a,e\rangle\langle c,g\rangle + 4\langle c,e\rangle\langle a,g\rangle \\ 
& -4\langle b,f\rangle\langle d,h\rangle + 4\langle d,f\rangle\langle b,h\rangle  \nonumber \\ 
& + \langle ed^*,gb^*\rangle - \langle eb^*,gd^*\rangle + \langle cf^*, ah^*\rangle - \langle af^*,ch^*\rangle \nonumber\\ 
& + \langle ad^*-cb^*,gf^*-eh^*\rangle \bigg),    \nonumber
\end{align}
where $a,b,c,d,e,f,g,h \in \O$.
For this formula, see \cite[Theorem 5.3 (1)]{held2009semi}, or see \cite[Section~1.2 and Table 2]{ohnita1986stable} and apply the basic identities (\ref{id: octonion1}) and (\ref{id: octonion2}). Notice that we use a slightly different identification for $T_p(\mathbb{O}P^{2})$ from the one in \cite{ohnita1986stable} ($\O \oplus \O$ versus $\O \oplus \overline{\O}$) and there is a typo in Equation (1.3) of \cite{ohnita1986stable}, where $v$ at the end of the second line should be $v^*$. Here we use this identification so as to be consistent with out definition of octonionic lines earlier. 

In particular, we have 
\begin{equation}\label{eqn: curvature}
\begin{aligned}
\langle  R\Big((a,b),(c,d)\Big)(a,b), (c,d)\rangle 
  = \frac{\lambda^2}{4}\bigg( &-4\langle a,a\rangle\langle c,c\rangle + 4\langle a,c\rangle^2  -4\langle b,b\rangle\langle d,d\rangle + 4\langle b,d\rangle^2\\
& + 2\langle ad^*,cb^*\rangle - 2\langle ab^*,cd^*\rangle - \langle ad^*-cb^*,ad^*-cb^*\rangle \bigg). 
\end{aligned}
\end{equation}

Let $f_1:\mathbb{O}P^{2}\rightarrow \mathbb{R}^{m}$ be the isotropic embedding induced by the generalized Veronese embedding (see Section 2 of \cite{sakamoto1977planar}). Let $B$ denote the second fundamental form of this embedding. Then at each $q\in \mathbb{\O} P^{2}$, we have
$$\|B(X, X)\|^2 = \lambda^2 \text{ for any unit vector $X$ in $T_{q}\O P^2$}.$$
Applying the Gauss equation yields
\begin{equation}
\begin{aligned}
\label{eqn: 2FF}
   \displaystyle 3\langle B(X,Y),B(Z,W)\rangle=&\langle R(X,Z)W,Y \rangle+\langle R(X,W)Z,Y\rangle +\lambda^{2}\langle X,Y\rangle \langle Z,W\rangle\\
   \displaystyle &+\lambda^{2} \langle X,W\rangle \langle Y,Z\rangle+\lambda^{2}\langle X,Z\rangle \langle W,Y\rangle,
   \end{aligned}
\end{equation}
where  $X,Y,Z,W \in T_{q}\mathbb{\O} P^{2}$ (see Proposition 3.6 in \cite{ohnita1986stable}).

\subsubsection{Proof of Theorem \ref{main3}}

Let $\Phi=(\phi_1,\phi_2):\Sigma\rightarrow \bar{M}:= \mathbb{O}P^{2}\times M$ be a stable compact minimal immersion of codimension $d$ and dimension $n$, where $M$ is any Riemannian manifold of dimension $m_{2}$.

Let $p\in \Sigma$, $\{e_{1},\ldots,e_{n}\}$
be an orthonormal basis of $d\Phi(T_{p}\Sigma)$, and  $\{\eta_{1},\ldots,\eta_{d}\}$ be an orthonormal basis of $N_{p}\Sigma$. Let $e_j^1$ be the projection of $e_j$ onto $T_{\phi_1(p)}\O P^{2}$ and $e_j^2$ be the projection of $e_j$ onto $T_{\phi_2(p)}M$. Similarly, let $\eta_j^1$ be the projection of $\eta_j$ onto $T_{\phi_1(p)}\O P^{2}$ and $\eta_j^2$ be the projection of $\eta_j$ onto $T_{\phi_2(p)}M$. 
We identify $T_{\phi_1(p)}\O P^{2}$ with $\O \oplus \O$.



For each $v\in \mathbb{R}^{m}$, we identify $v$ with $(v, 0) \in T (\mathbb{R}^{m}\times M)$. 
Same as in Section \ref{subsubsec: imm}, we denote by $N_v$  the projection of $v$ onto the normal space $N_{p}\Sigma$ in $T_{\Phi(p)}\bar{M}$. As in the complex and quaternionic cases, we are going to use the normal sections $N_v$ in the second variation formula, where $v$ runs over an orthonormal basis of  $\mathbb{R}^{m}$.

\begin{lem}
\label{GENERALO}
Same assumptions as above. Let $\{E_{1},\ldots,E_{m}\}$ be the usual canonical basis of $\mathbb{R}^{m}$. Then, 
\begin{align}
& \sum_{A=1}^{m} -\langle N_{E_{A}},J_{\Sigma}(N_{E_{A}})\rangle \nonumber \\
=& {\lambda^2} \bigg(\sum_{j=1}^n\sum_{i=1}^n |e_j^1|^2|\Proj_{L(e_j^1)} e_i^1|^2  - 8\sum_{j=1}^n\sum_{i=1}^n\langle e^{1}_{j},e_i^1\rangle^2 - 6\sum_{j=1}^n\sum_{k=1}^d \langle e_j^1, \eta_k^1 \rangle^2  \bigg) \label{oMAIN1} \\
=& {\lambda^2} \bigg(\sum_{k=1}^d\sum_{l=1}^d |\eta_k^1|^2|\Proj_{L(\eta_k^1)} \eta_l^1|^2  - 8\sum_{k=1}^d\sum_{l=1}^d\langle \eta^{1}_{k},\eta^{1}_{l}\rangle^2- 6\sum_{j=1}^n\sum_{k=1}^d \langle e_j^1, \eta_k^1 \rangle^2  \bigg), \label{oMAIN2}
\end{align}
where $L(e_j^1)$ is the octonionic line containing $e_j^1$ as defined above when $e_j^1 \neq 0$. When $e_j^1 = 0$, we just let $L(e_j^1)$ be any octonionic line. 
Same for $L(\eta_k^1)$.
\end{lem}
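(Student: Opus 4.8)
The plan is to begin from the reduction already available in Lemma \ref{lem: 2FF computation}, which for the isotropic embedding $f_1\colon \O P^2 \to \R^m$ rewrites the left-hand side as
$$\sum_{i=1}^n\sum_{\beta=1}^d\Big(\langle B(\eta_\beta^1,\eta_\beta^1),B(e_i^1,e_i^1)\rangle - 2\|B(e_i^1,\eta_\beta^1)\|^2\Big),$$
a sum over one tangent and one normal index. First I would feed each summand through the Gauss equation (\ref{eqn: 2FF}): taking $(X,Y,Z,W)=(\eta_\beta^1,\eta_\beta^1,e_i^1,e_i^1)$ in the first inner product and $(e_i^1,\eta_\beta^1,e_i^1,\eta_\beta^1)$ in the second, the two $R(e_i^1,e_i^1)$-type terms vanish and the remaining second-fundamental-form data combine, after a short computation, into a multiple of the sectional-curvature term $\langle R(\eta_\beta^1,e_i^1)e_i^1,\eta_\beta^1\rangle$ together with $|\eta_\beta^1|^2|e_i^1|^2$ and $\langle\eta_\beta^1,e_i^1\rangle^2$.

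The heart of the matter is to evaluate that curvature using the explicit formula (\ref{eqn: curvature}). The key linear-algebra identity I would first establish is that, writing $u=(a,b)$ and $w=(c,d)$ in $\O\oplus\O$,
$$|u|^2\,|\Proj_{L(u)}w|^2 = |a|^2|c|^2 + |b|^2|d|^2 + 2\langle ab^*,cd^*\rangle,$$
which I would prove by projecting $w$ onto the octonionic line $L(u)=\ell_{ba^{-1}}$ using the orthonormal frame of Lemma \ref{lem: octo line inner product} and the identities (\ref{id: octonion1})--(\ref{id: octonion2}). Combining this with (\ref{id: octonion3}) applied with $b\mapsto d^*,\,d\mapsto b^*$, which yields $\langle ad^*,cb^*\rangle = 2\langle a,c\rangle\langle b,d\rangle - \langle ab^*,cd^*\rangle$, converts (\ref{eqn: curvature}) into the clean expression
$$\langle R(u,w)w,u\rangle = \tfrac{\lambda^2}{4}\Big(|u|^2|w|^2 - 4\langle u,w\rangle^2 + 3\,|u|^2|\Proj_{L(u)}w|^2\Big).$$
Substituting back, every summand collapses to $\lambda^2\big(2\langle\eta_\beta^1,e_i^1\rangle^2 - |\eta_\beta^1|^2|\Proj_{L(\eta_\beta^1)}e_i^1|^2\big)$, an expression symmetric in the tangent and normal roles.

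To pass to the two stated forms I would use the completeness relation coming from the fact that $\{e_1,\dots,e_n,\eta_1,\dots,\eta_d\}$ is an orthonormal basis of $T_{\Phi(p)}\bar M$: projecting onto the $\O P^2$-factor gives $\sum_i e_i^1(e_i^1)^T + \sum_\beta \eta_\beta^1(\eta_\beta^1)^T = \Id_{16}$ on $\O\oplus\O$. Viewing $u\mapsto |u|^2|\Proj_{L(u)}w|^2$ as a quadratic form in $u$ and tracing it against this identity — using that on the standard basis $\sum_{s}|\Proj_{L((I_s,0))}w|^2 + \sum_s|\Proj_{L((0,I_s))}w|^2 = 8|w|^2$ — lets me trade the normal index $\beta$ for the tangent index, which produces (\ref{oMAIN1}); trading the tangent index $i$ for the normal index instead produces (\ref{oMAIN2}). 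The coefficients $8$ and $6$ arise exactly from the trace value $8$ and the recombination $8-2=6$. Since both final expressions come from the same symmetric mixed sum, their equality is automatic.

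The step I expect to be the main obstacle is the octonionic curvature identity in the second paragraph. Because octonion multiplication is non-associative, the line through $u=(a,b)$ is $\ell_{ba^{-1}}$ rather than anything of a naive form, so the projection computation must be arranged so that each reassociation is licensed by one of (\ref{id: octonion1})--(\ref{id: octonion3}); in particular verifying that the coefficient of the projection term is precisely $3$ is the delicate point. I would also pay careful attention to the curvature sign convention shared by (\ref{eqn: 2FF}) and (\ref{eqn: curvature}) when combining the Gauss terms, since this fixes the overall sign of the summand. Everything downstream of the curvature identity is bookkeeping with the completeness relation and the trace computation.
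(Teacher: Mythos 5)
Your proposal is correct, and it reaches both (\ref{oMAIN1}) and (\ref{oMAIN2}), but the central computation is done by a genuinely different route than the paper's. You share the paper's skeleton: Lemma \ref{lem: 2FF computation} plus the Gauss equation (\ref{eqn: 2FF}) reduce everything to the mixed sum of Equation (\ref{eqn: octo key eqn}), and your final ``trace against the completeness relation'' step is exactly the paper's substitution of Equations (\ref{eqn: octo norm substitution 1}) and (\ref{eqn: octo norm substitution 2}). The divergence is in evaluating the curvature summand. The paper argues equivariantly: it moves $e_j^1$ to $(|e_j^1|,0)$ by some $g_j \in \Spin(9)$, generates the line directions by elements $h_s \in \Spin(8)$, and so only ever evaluates (\ref{eqn: curvature}) at the special pair $\big((a,0),(c,d)\big)$, identifying $g_j^{-1}(\O \oplus 0) = L(e_j^1)$ at the end. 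You instead work at a general pair via the algebraic identity
$$|u|^2\,|\Proj_{L(u)}w|^2 = |a|^2|c|^2 + |b|^2|d|^2 + 2\langle ab^*, cd^*\rangle, \qquad u=(a,b),\ w=(c,d),$$
which does check out: with $m = ba^{-1}$ the frame $\tfrac{|a|}{|u|}(I_s, mI_s)$ together with (\ref{id: octonion1})--(\ref{id: octonion2}) and multiplicativity of the octonion norm gives precisely this (and the case $a=0$, line $\ell_\infty$, holds trivially). Combined with (\ref{id: octonion3}) in the form $\langle ad^*,cb^*\rangle = 2\langle a,c\rangle\langle b,d\rangle - \langle ab^*,cd^*\rangle$, your closed-form curvature identity $\langle R(u,w)w,u\rangle = \tfrac{\lambda^2}{4}\big(|u|^2|w|^2 - 4\langle u,w\rangle^2 + 3|u|^2|\Proj_{L(u)}w|^2\big)$ is correct (it exhibits the quarter-pinching $\lambda^2/4 \le K \le \lambda^2$, a good sanity check, and the delicate coefficient $3$ is right), and the collapse of each summand to $\lambda^2\big(2\langle e_j^1,\eta_k^1\rangle^2 - |e_j^1|^2|\Proj_{L(e_j^1)}\eta_k^1|^2\big)$ plus the $8$ and $2-8=-6$ bookkeeping match the paper's computation exactly. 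What your route buys: no group theory at all, an intrinsic curvature formula of independent interest, and --- since the right-hand side of your projection identity is symmetric under $(a,b)\leftrightarrow(c,d)$, so that $|u|^2|\Proj_{L(u)}w|^2 = |w|^2|\Proj_{L(w)}u|^2$ --- the equality of (\ref{oMAIN1}) and (\ref{oMAIN2}) is literally automatic, whereas the paper only gestures at ``symmetry.'' What the paper's route buys: it never needs a projection onto a general line $\ell_{ba^{-1}}$, so no reassociation issues arise in the first place. One sign to make explicit when you write this up: the printed statement of Lemma \ref{lem: 2FF computation} has the opposite sign to Equation (\ref{mainequation}) actually used in the paper's proof; the operative sign is $2\|B\|^2 - \langle B,B\rangle$ (test against the round sphere, where the sum must be negative by Simons), and your collapsed summand follows this correct convention even though your opening paragraph quotes the lemma verbatim --- reconcile the two so the sign flip is deliberate rather than silent.
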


\begin{proof}
We proceed in the same way as in the proof of Lemma 3.1 in \cite{Ramirez3}. By Lemma \ref{lem: 2FF computation}, we have the following

\begin{equation}
\label{mainequation}
    \sum_{A=1}^{m} -\langle N_{E_{A}},J_{\Sigma}(N_{E_{A}})\rangle=\sum_{j=1}^{n} \sum_{k=1}^{d}2|B(e_{j}^{1},\eta_{k}^{1})|^{2}-\langle B(\eta_{k}^{1},\eta_{k}^{1}),B(e_{j}^{1},e_{j}^{1})\rangle.
\end{equation}
Using Equation (\ref{eqn: 2FF}), we have
\begin{center}
    $\displaystyle3|B(e^{1}_{j},\eta^{1}_{k})|^{2}=\langle R(e^{1}_{j},\eta^{1}_{k})e^{1}_{j},\eta^{1}_{k}\rangle+ 2\lambda^{2}\langle e^{1}_{j},\eta^{1}_{k}\rangle^{2}+\lambda^{2}|e^{1}_{j}|^{2}|\eta^{1}_{k}|^{2}
    $
\end{center}
and 
\begin{center}
    $\displaystyle 3\langle B(\eta_{k}^{1}, \eta_{k}^{1}),B(e_{j}^{1},e_{j}^{1})\rangle=
    -2\langle R(e^{1}_{j},\eta^{1}_{k})e^{1}_{j},\eta^{1}_{k}\rangle  +\lambda^{2} |e^{1}_{j}|^{2}|\eta^{1}_{k}|^{2}+2\lambda^{2}\langle e^{1}_{j},\eta^{1}_{k}\rangle^{2} $.
\end{center}
Now applying the last two equalities in Equation (\ref{mainequation}), we obtain
\begin{align}
\sum_{A=1}^{m} -\langle N_{E_{A}},J_{\Sigma}(N_{E_{A}})\rangle &=     \sum_{j=1}^{n}\sum_{k=1}^{d}-\frac{4}{3} \langle R(e^{1}_{j},\eta^{1}_{k})\eta^{1}_{k},e^{1}_{j}\rangle+\frac{2\lambda^{2}}{3}\langle e^{1}_{j},\eta^{1}_{k}\rangle^{2}+\frac{\lambda^{2}}{3}|e^{1}_{j}|^{2}|\eta^{1}_{k}|^{2} \nonumber \\
\label{eqn: octo key eqn}
& = \sum_{j=1}^{n}\sum_{k=1}^{d}\frac{4}{3} \langle R(e^{1}_{j},\eta^{1}_{k})e^{1}_{j}, \eta^{1}_{k}\rangle+\frac{2\lambda^{2}}{3}\langle e^{1}_{j},\eta^{1}_{k}\rangle^{2}+\frac{\lambda^{2}}{3}|e^{1}_{j}|^{2}|\eta^{1}_{k}|^{2}.
\end{align}

We look at the summand. Notice that each term in the summand is invariant under isometries. Since $\Spin(9)$ is isomorphic to the isotropy group at $p$ and $\Spin(9)$ acts transitively and effectively on $S^{15}$, there exists some $g_j \in \Spin(9)$ such that $g_j(e_j^1) = (|e_j^1|,0) \in \O \oplus \O$. We then have $g_j(e_j^1) = (a, 0)$ for some $a \in \R$, $a>0$ and $g_j(\eta_k^1) = (c,d)$ for some $c,d \in \O$.

Also note that the setwise stabilizer of the 8-dimensional subspace $\O \oplus 0$ is isomorphic to $\Spin(8)$. For the vectors $(I_s, 0)$, $s = 0,\dots, 7$, let $h_s$ be an element of $\Spin(8) \subset \Spin(9)$ such that $h_s ((I_0, 0)) = (I_s, 0)$ and we take $h_0 = \Id$.

Now using Equation (\ref{eqn: curvature}) for the curvature tensor, we have 

\begin{equation*}
\begin{aligned}
& \frac{4}{3} \langle R(e^{1}_{j},\eta^{1}_{k})e^{1}_{j}, \eta^{1}_{k}\rangle+\frac{2\lambda^{2}}{3}\langle e^{1}_{j},\eta^{1}_{k}\rangle^{2}+\frac{\lambda^{2}}{3}|e^{1}_{j}|^{2}|\eta^{1}_{k}|^{2} \\
= & \frac{4}{3} \langle R\Big(g_j(e^{1}_{j}),g_j(\eta^{1}_{k})\Big)g_j(e^{1}_{j}), g_j(\eta^{1}_{k})\rangle+\frac{2\lambda^{2}}{3}\langle g_j(e^{1}_{j}),g_j(\eta^{1}_{k})\rangle^{2}+\frac{\lambda^{2}}{3}|g_j(e^{1}_{j})|^{2}|g_j(\eta^{1}_{k})|^{2} \\
= & \frac{4}{3} \langle R\Big((a,0),(c,d)\Big)(a,0), (c,d)\rangle + \frac{2\lambda^{2}}{3} \langle a, c \rangle ^2 + \frac{\lambda^2}{3} |a|^2\Big(|c|^2 +  |d|^2 \Big)\\
= & \frac{\lambda^2}{3}\bigg( -4|a|^2|c|^2 + 4\langle a,c\rangle^2  - |ad|^2\bigg) + \frac{2\lambda^{2}}{3} \langle a, c \rangle ^2 + \frac{\lambda^2}{3} \Big(|a|^2|c|^2 +  |a|^2|d|^2 \Big) \\
= &  {\lambda^2}\bigg(-|a|^2|c|^2 + 2\langle a,c\rangle^2   \bigg) \\
= &  {\lambda^2}\bigg(-|a|^2\sum_{s=0}^7 \langle I_s,c \rangle^2 + 2\langle a,c\rangle^2  \bigg) \\
= &  {\lambda^2}\bigg(-\sum_{s=0}^7 \big(\langle aI_s,c \rangle + \langle 0,d \rangle \big)^2 + 2\langle a,c\rangle^2   \bigg) \\
= &  {\lambda^2}\bigg(-\sum_{s=0}^7 \big(\langle h_sg_j(e_j^1), g_j(\eta_k^1) \rangle^2 + 2\langle g_j(e^{1}_{j}),g_j(\eta^{1}_{k})\rangle^2   \bigg)\\
= &  {\lambda^2}\bigg(-\sum_{s=0}^7 \big(\langle g_j^{-1}h_sg_j(e_j^1), \eta_k^1 \rangle^2 + 2\langle e^{1}_{j},\eta^{1}_{k}\rangle^2   \bigg).
\end{aligned}
\end{equation*}
Therefore Equation (\ref{eqn: octo key eqn}) becomes
\begin{equation}\label{eqn: octo computation}
\sum_{A=1}^{m} -\langle N_{E_{A}},J_{\Sigma}(N_{E_{A}})\rangle  =    {\lambda^2} \bigg(- \sum_{j=1}^n\sum_{k=1}^d\sum_{s=0}^7 \big(\langle g_j^{-1}h_sg_j(e_j^1), \eta_k^1 \rangle^2 + 2\sum_{j=1}^n\sum_{k=1}^d\langle e^{1}_{j},\eta^{1}_{k}\rangle^2   \bigg).
\end{equation}

Since $e_1,\dots, e_n, \eta_1,\dots. \eta_d$ form a basis of $T_{\Phi(p)}(\O P^2 \times M$), we have
$$
(e_j^1, 0) = \sum_{i=1}^n \langle (e_j^1, 0), e_i \rangle e_i + \sum_{k=1}^d \langle (e_j^1, 0), \eta_k \rangle \eta_k,$$
so looking at the projection onto $T_{\phi_1(p)}\O P^2$ yields
$$
e_j^1 = \sum_{i=1}^n \langle e_j^1, e_i^1 \rangle e_i^1 + \sum_{k=1}^d \langle e_j^1, \eta_k^1 \rangle \eta_k^1,$$
and taking inner product of $e_j^1$ with itself gives
\begin{equation}\label{eqn: octo norm substitution 1}
 |e^{1}_{j}|^{2} = \sum_{i=1}^n \langle e_j^1, e_i^1 \rangle^2 + \sum_{k=1}^d \langle e_j^1, \eta_k^1 \rangle^2.
\end{equation}
Similarly, using $g_j^{-1}h_sg_j(e_j^1)$ instead of $e_j^1$ gives 

\begin{equation}\label{eqn: octo norm substitution 2}
 |e^{1}_{j}|^{2} =|g_j^{-1}h_sg_j(e_j^1)|^{2} = \sum_{i=1}^n \langle g_j^{-1}h_sg_j(e_j^1), e_i^1 \rangle^2 + \sum_{k=1}^d \langle g_j^{-1}h_sg_j(e_j^1), \eta_k^1 \rangle^2.
\end{equation}

Substituting Equations (\ref{eqn: octo norm substitution 1}) and (\ref{eqn: octo norm substitution 2}) back into Equation (\ref{eqn: octo computation}), we have
\begin{align*}
 & \sum_{A=1}^{m} -\langle N_{E_{A}},J_{\Sigma}(N_{E_{A}})\rangle  \\
=&    {\lambda^2} \bigg(\sum_{j=1}^n\sum_{s=0}^7\Big(\sum_{i=1}^n \langle g_j^{-1}h_sg_j(e_j^1), e_i^1 \rangle^2 - |e_j^1|^2\Big) + 2\sum_{j=1}^n\Big(|e_j^1|^2-\sum_{i=1}^n\langle e^{1}_{j},e_i^1\rangle^2\Big)  \bigg) \\
=& {\lambda^2} \bigg(\sum_{j=1}^n\sum_{i=1}^n\sum_{s=0}^7 \langle h_sg_j(e_j^1), g_j(e_i^1) \rangle^2  - 2\sum_{j=1}^n\sum_{i=1}^n\langle e^{1}_{j},e_i^1\rangle^2 - 6\sum_{j=1}^n|e_j^1|^2  \bigg) \\
=& {\lambda^2} \bigg(\sum_{j=1}^n\sum_{i=1}^n\sum_{s=0}^7 \langle h_s((|e_j^1|,0)), g_j(e_i^1) \rangle^2  - 2\sum_{j=1}^n\sum_{i=1}^n\langle e^{1}_{j},e_i^1\rangle^2 - 6\sum_{j=1}^n\Big(\sum_{i=1}^n \langle e_j^1, e_i^1 \rangle^2 + \sum_{k=1}^d \langle e_j^1, \eta_k^1 \rangle^2\Big)  \bigg) \\
=& {\lambda^2} \bigg(\sum_{j=1}^n\sum_{i=1}^n\sum_{s=0}^7 |e_j^1|^2\langle (I_s,0), g_j(e_i^1) \rangle^2  - 8\sum_{j=1}^n\sum_{i=1}^n\langle e^{1}_{j},e_i^1\rangle^2 - 6\sum_{j=1}^n\sum_{k=1}^d \langle e_j^1, \eta_k^1 \rangle^2  \bigg) \\
=& {\lambda^2} \bigg(\sum_{j=1}^n\sum_{i=1}^n |e_j^1|^2|\Proj_{\O\oplus 0} g_j(e_i^1)|^2  - 8\sum_{j=1}^n\sum_{i=1}^n\langle e^{1}_{j},e_i^1\rangle^2 - 6\sum_{j=1}^n\sum_{k=1}^d \langle e_j^1, \eta_k^1 \rangle^2  \bigg) \\
=& {\lambda^2} \bigg(\sum_{j=1}^n\sum_{i=1}^n |e_j^1|^2|\Proj_{g_j^{-1}(\O\oplus 0)} e_i^1|^2  - 8\sum_{j=1}^n\sum_{i=1}^n\langle e^{1}_{j},e_i^1\rangle^2 - 6\sum_{j=1}^n\sum_{k=1}^d \langle e_j^1, \eta_k^1 \rangle^2  \bigg) \\
=& {\lambda^2} \bigg(\sum_{j=1}^n\sum_{i=1}^n |e_j^1|^2|\Proj_{L(e_j^1)} e_i^1|^2  - 8\sum_{j=1}^n\sum_{i=1}^n\langle e^{1}_{j},e_i^1\rangle^2 - 6\sum_{j=1}^n\sum_{k=1}^d \langle e_j^1, \eta_k^1 \rangle^2  \bigg),
\end{align*}
which is Equation (\ref{oMAIN1}).

Equation (\ref{oMAIN2}) follows from symmetry between $e_j$ and $\eta_k$ in Equation (\ref{eqn: octo key eqn}) and Equation (\ref{eqn: curvature}) for the curvature tensor.

\end{proof}

\begin{prop}\label{prop: octo ineq}
Let $x_1, \dots x_n$ be nonzero vectors  in $\O \oplus \O$. Then 

$$\sum_{j=1}^n\sum_{i=1}^n |x_j|^2|\Proj_{L(x_j)} x_i|^2  - 8\sum_{j=1}^n\sum_{i=1}^n\langle x_i,x_j\rangle^2 \le 0.$$
\end{prop}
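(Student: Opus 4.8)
The plan is to recast both sums as Frobenius inner products of positive semidefinite $16\times 16$ matrices and then run a self-improving Cauchy--Schwarz argument. First I would write $X = \begin{bmatrix} x_1 & \dots & x_n\end{bmatrix}$ and set $G := XX^T$, so that the subtracted sum is $\sum_{i,j}\langle x_i,x_j\rangle^2 = \|X^TX\|^2 = \|G\|^2 = \Tr(G^2)$. For each $j$ let $P_j$ denote the orthogonal projection of $\O\oplus\O$ onto the octonionic line $L(x_j)$ (well defined since $x_j\neq 0$), and set $M := \sum_{j=1}^n |x_j|^2 P_j$, which is symmetric and positive semidefinite. Since $\Tr(P_j\, x_ix_i^T) = |\Proj_{L(x_j)} x_i|^2$, the first sum becomes
$$S := \sum_{i,j=1}^n |x_j|^2|\Proj_{L(x_j)} x_i|^2 = \sum_{j=1}^n |x_j|^2\Tr(P_j G) = \Tr(MG) = \langle M, G\rangle,$$
so the goal reduces to proving $S \le 8\,\Tr(G^2)$.

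The key structural input will be Lemma \ref{lem: octo line inner product}. Writing $U_i$ for a matrix whose columns form an orthonormal basis of $L(x_i)$, the lemma gives $U_i^TU_j = \gamma_{ij}Q_{ij}$ with $Q_{ij}$ an $8\times 8$ orthogonal matrix and $\gamma_{ij}\in[0,1]$ symmetric in $i,j$. Two consequences drive the argument. First, projecting the unit vector $x_i/|x_i|\in L(x_i)$ into $L(x_j)$ scales its length by exactly $\gamma_{ij}$, so $|\Proj_{L(x_j)}x_i|^2 = \gamma_{ij}^2|x_i|^2$ and hence $S = \sum_{i,j}\gamma_{ij}^2|x_i|^2|x_j|^2$. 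Second, $\Tr(P_iP_j) = \|U_i^TU_j\|^2 = 8\gamma_{ij}^2$, from which
$$\|M\|^2 = \Tr(M^2) = \sum_{i,j=1}^n |x_i|^2|x_j|^2\,\Tr(P_iP_j) = 8\sum_{i,j=1}^n \gamma_{ij}^2|x_i|^2|x_j|^2 = 8S.$$

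With these two identities, the Cauchy--Schwarz inequality for the Frobenius inner product finishes the proof:
$$S = \langle M, G\rangle \le \|M\|\,\|G\| = \sqrt{8S}\,\sqrt{\Tr(G^2)}.$$
Because the diagonal terms force $S \ge \sum_j |x_j|^4 > 0$, I may cancel one factor of $\sqrt{S}$ to obtain $\sqrt{S} \le \sqrt{8\,\Tr(G^2)}$, that is, $S \le 8\,\Tr(G^2) = 8\sum_{i,j}\langle x_i,x_j\rangle^2$, which is precisely the claimed inequality.

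The main obstacle is establishing the compatibility identity $\|M\|^2 = 8S$, and this is exactly where the geometry of octonionic lines is essential: Lemma \ref{lem: octo line inner product} says the overlap $U_i^TU_j$ of two octonionic lines is a scalar $\gamma_{ij}$ times an orthogonal matrix, which forces $\Tr(P_iP_j) = 8\gamma_{ij}^2$, i.e. exactly $8$ times the coefficient appearing in $S$. This rigidity is what converts the plain Cauchy--Schwarz bound into the self-improving inequality $S \le \sqrt{8S}\sqrt{\Tr(G^2)}$; for arbitrary $8$-dimensional subspaces in place of octonionic lines the constant $8$ would fail, so the non-associative structure is used in an essential way. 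The remaining steps are formal trace manipulations and I anticipate no difficulty with them.
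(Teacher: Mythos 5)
Your proof is correct, and it takes a genuinely different route from the paper's. The paper first groups the $x_i$ according to the distinct octonionic lines they lie on, diagonalizes the Gram matrix of each group to extract eigenvalues $\lambda_{r,i}$, rewrites the two sums as $\sum c_{r,s}^2\lambda_{r,i}\lambda_{s,j}$ and $\sum c_{r,s}^2\lambda_{r,i}\lambda_{s,j}(A_{r,s})_{ij}^2$, and then maximizes their difference subject to $\sum_{r,i}\lambda_{r,i}^2 = C$ via Lagrange multipliers, using orthogonality of the $A_{r,s}$ to show the maximum is nonpositive. You bypass the grouping, the eigendecomposition, and the constrained optimization entirely: setting $G = XX^T$ and $M = \sum_j |x_j|^2 P_j$, your two trace identities $\langle M,G\rangle = S$ and $\|M\|^2 = 8S$ (the latter from $\Tr(P_iP_j) = \|U_i^TU_j\|^2 = 8\gamma_{ij}^2$ and $|\Proj_{L(x_j)}x_i|^2 = \gamma_{ij}^2|x_i|^2$) reduce everything to the self-improving Cauchy--Schwarz step $S \le \sqrt{8S}\,\|G\|$, with $S \ge \sum_j|x_j|^4 > 0$ justifying the cancellation. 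All the steps check out, including the two consequences you draw from Lemma \ref{lem: octo line inner product}; the only point worth making explicit is that $\gamma_{ij}$ is independent of the chosen orthonormal bases $U_i$ (a base change multiplies $U_i^TU_j$ by orthogonal matrices on both sides, preserving the scalar) and symmetric since $(U_i^TU_j)^T = U_j^TU_i$ --- both immediate. Crucially, both arguments use the same geometric input, the conformality of the overlap of two octonionic lines from Lemma \ref{lem: octo line inner product}, and your closing remark is exactly right: for arbitrary $8$-dimensional subspaces $\Tr(P_iP_j)$ need not equal $8\gamma_{ij}^2$ for the $\gamma_{ij}$ governing $S$, so the constant $8$ would fail. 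What your approach buys is brevity, a transparent explanation of the constant $8$ as $\Tr(\Id_8)$, and structural parallelism with the Cauchy--Schwarz argument of Proposition \ref{ineq: key result}; what the paper's eigenvalue formulation buys is an explicit coordinate expression for the deficit $f$ that would lend itself to an analysis of the equality case, though the paper does not in fact pursue equality for this proposition, so nothing is lost by your shorter route.
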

\begin{proof}
Group the $x_i$'s according to the octonionic lines they belong to. Let $\ell_{m_1}, \dots, \ell_{m_k}$ be the resulting distinct octonionic lines. 
For each $\ell_{m_s}$, let $x_1^s, \dots, x_{n_s}^s$ be the vectors among $x_1, \dots, x_n$ that are on the line $\ell_{m_s}$.
Let $X_s$ be the $16 \times n_s$ matrix
$$X_s = [x_1^s \dots x_{n_s}^s].$$

Since $X_s^TX_s = [\langle x_i^s, x_j^s\rangle]_{i,j=1}^{n_s}$ is a Gram matrix, it is symmetric positive semi-definite. There thus exists some orthogonal matrix $Q_s$ of size $n_s$ such that $Q_s^TX_s^TX_sQ_s$ is diagonal. Further, since the columns of $X_s$ are from the same octonionic line, $X_s$ has at most rank $8$. Thus we can assume that $Q_s^TX_s^TX_sQ_s = \diag(\lambda_{s,1} \dots, \lambda_{s,8}, 0\dots, 0)$, where $\lambda_{s,1} \dots, \lambda_{s,8} \ge 0$ are the eigenvalues of $X_s^TX_s$.
The matrix identity 
$$[\langle (X_sQ_s)_i, (X_sQ_s)_j \rangle]_{i,j=1}^{n_s} = Q_s^TX_s^TX_sQ_s = \diag(\lambda_{s,1} \dots, \lambda_{s,8}, 0\dots, 0)$$
tells us that the $16 \times n_s$ matrix $X_sQ_s$ has orthogonal columns, and all but the first eight columns are zero vectors. Since $X_sQ_s$ and $X_s$ have the same column span, there exist orthonormal vectors $y_1^s,\dots, y_8^s \in \ell_{m_s}$ such that if we define an $16 \times 8$ matrix $Y_s$ and an $8\times n_s$ matrix $D_s$ as 
$$Y_s = [y_1^s \dots y_8^s] \quad \text{and} \quad D_s = [\diag(\sqrt{\lambda_{s,1}} \dots, \sqrt{\lambda_{s,8}}) \ 0 \dots 0],$$
then $X_sQ_s = Y_sD_s$. This gives $X_s = Y_sD_sQ_s^{T}$. 

The strategy for the proof is to rewrite 
\begin{center}
$\displaystyle\sum_{j=1}^n\sum_{i=1}^n |x_j|^2|\Proj_{L(x_j)} x_i|^2  - 8\sum_{j=1}^n\sum_{i=1}^n\langle x_i,x_j\rangle^2$
\end{center}
 in terms of the eigenvalues $\lambda_{s,j}$ so that we get a nicer expression, for which we can do maximization. In doing so, we need the $\ell_{m_s}$'s to be octonionic lines instead of aribitrary 8-dimensional spaces, and Lemma \ref{lem: octo line inner product} plays a crucial role.
 
We first compute
\begin{align*}
 \sum_{j=1}^n\sum_{i=1}^n\langle x_i,x_j\rangle^2 &= \sum_{r=1}^k\sum_{s=1}^k\sum_{i=1}^{n_r}\sum_{j=1}^{n_s}\langle x^r_i,x^s_j\rangle^2 \\
 &= \sum_{r=1}^k\sum_{s=1}^k \Tr((X_r^TX_s)^TX_r^TX_s) \\
 &=  \sum_{r=1}^k\sum_{s=1}^k\Tr(X_s^TX_rX_r^TX_s) \\
 &=  \sum_{r=1}^k\sum_{s=1}^k\Tr(X_rX_r^TX_sX_s^T) \\
 &=  \sum_{r=1}^k\sum_{s=1}^k\Tr(Y_rD_rQ_r^{T}(Y_rD_rQ_r^{T})^TY_sD_sQ_s^{T}(Y_sD_sQ_s^{T})^T) \\
  &=  \sum_{r=1}^k\sum_{s=1}^k\Tr\Big(Y_s^TY_rD_rD_r^{T}Y_r^TY_sD_sD_s^{T}).
\end{align*}

Lemma \ref{lem: octo line inner product} shows that $Y_r^TY_s = c_{r,s}A_{r,s}$ for some orthogonal matrix $A_{r,s}$ of size $8$, and some constant $0 \le c_{r,s} \le 1$. Also, $c_{r,s} = c_{s,r}$ and $A_{r,s} = A_{s,r}^T$. In particular, when $r =s$, we have $c_{r,r} = 1$ and $A_{r,r} = \Id_8$. 
Thus
\begin{align}
 \sum_{j=1}^n\sum_{i=1}^n\langle x_i,x_j\rangle^2 &= \sum_{r=1}^k\sum_{s=1}^{k}\Tr(Y_s^TY_rD_rD_r^{T}Y_r^TY_sD_sD_s^{T}) \nonumber \\
 & = \sum_{r=1}^k\sum_{s=1}^kc_{r,s}^2 \Tr(A_{r,s}^TD_rD_r^{T}A_{r,s}D_sD_s^{T}) \nonumber\\
& = \sum_{r=1}^k\sum_{s=1}^k\sum_{i=1}^8 \sum_{j=1}^8 c_{r,s}^2 \lambda_{r,i} \lambda_{s,j} (A_{r,s})_{ij}^2.
\label{eq: inner product}
\end{align}


We then look at the quantity
\begin{center}
    $\displaystyle\sum_{j=1}^n\sum_{i=1}^n |x_j|^2|\Proj_{L(x_j)} x_i|^2$.
\end{center}
Notice that since $X_s^TX_s = [\langle x_i^s, x_j^s \rangle ]_{i,j=1}^{n_s}$, we have

\begin{center}
    $\displaystyle \Tr(X_s^TX_s)=\sum_{j=1}^{n_s}|x_j^s|^2 $.
\end{center} 
On the other hand, since $X_s^TX_s$ is a symmetric matrix, its trace equals to the sum of its eigenvalues. Thus $\sum_{j=1}^{n_s}|x_j^s|^2 = \Tr(X_s^TX_s) = \sum_{j=1}^8 \lambda_{s,j}$. Using this and the identities $X_r = Y_rD_rQ_r^{T}$ and $Y_r^TY_s = c_{r,s}A_{r,s}$, we get 

\begin{align} 
\sum_{j=1}^n\sum_{i=1}^n |x_j|^2|\Proj_{L(x_j)} x_i|^2   & = \sum_{r=1}^k\sum_{s=1}^k\sum_{i=1}^{n_r}\sum_{j=1}^8 \lambda_{s,j}|\Proj_{\ell_{m_s}} x_i^r|^2 \nonumber \\
& = \sum_{r=1}^k\sum_{s=1}^k\sum_{j=1}^8 \lambda_{s,j} \sum_{i=1}^{n_r}\sum_{p=1}^{8}\langle x_i^r, y_p^s \rangle^2 \nonumber \\
& = \sum_{r=1}^k\sum_{s=1}^k\sum_{j=1}^8 \lambda_{s,j} \Tr((X_r^TY_s)^TX_r^TY_s) \nonumber \\
& = \sum_{r=1}^k\sum_{s=1}^k\sum_{j=1}^8 \lambda_{s,j} \Tr(Y_s^TX_rX_r^TY_s) \nonumber \\
& = \sum_{r=1}^k\sum_{s=1}^k\sum_{j=1}^8 \lambda_{s,j} \Tr(Y_sY_s^TX_rX_r^T) \nonumber \\
& = \sum_{r=1}^k\sum_{s=1}^k\sum_{j=1}^8 \lambda_{s,j} \Tr(Y_sY_s^T(Y_rD_rQ_r^{T})(Y_rD_rQ_r^{T})^T) \nonumber \\
& = \sum_{r=1}^k\sum_{s=1}^k\sum_{j=1}^8 \lambda_{s,j} \Tr(Y_sY_s^TY_rD_rQ_r^{T}Q_rD_r^TY_r^T) \nonumber \\
& = \sum_{r=1}^k\sum_{s=1}^k\sum_{j=1}^8 \lambda_{s,j} \Tr(Y_sY_s^TY_rD_rD_r^TY_r^T) \nonumber \\
& = \sum_{r=1}^k\sum_{s=1}^k\sum_{j=1}^8 \lambda_{s,j} \Tr(Y_s^TY_rD_rD_r^TY_r^TY_s) \nonumber \\
& = \sum_{r=1}^k\sum_{s=1}^k\sum_{j=1}^8 \lambda_{s,j} c_{r,s}^2\Tr(A_{r,s}^TD_rD_r^TA_{r,s}) \nonumber \\
& = \sum_{r=1}^k\sum_{s=1}^k\sum_{j=1}^8 \lambda_{s,j} c_{r,s}^2\Tr(A_{r,s}A_{r,s}^TD_rD_r^T) \nonumber \\
& = \sum_{r=1}^k\sum_{s=1}^k\sum_{j=1}^8 \lambda_{s,j} c_{r,s}^2\Tr(D_rD_r^T) \nonumber \\
& = \sum_{r=1}^k\sum_{s=1}^k\sum_{j=1}^8 \lambda_{s,j} c_{r,s}^2\sum_{i=1}^8 \lambda_{r,i}\nonumber \\
& = \sum_{r=1}^k\sum_{s=1}^k\sum_{i=1}^8\sum_{j=1}^8 c_{r,s}^2 \lambda_{r,i}\lambda_{s,j}. \label{eq: projection}
\end{align}

Using (\ref{eq: inner product}) and (\ref{eq: projection}), we have that 
\begin{align*}
 & \sum_{j=1}^n\sum_{i=1}^n |x_j|^2|\Proj_{L(x_j)} x_i|^2  - 8\sum_{j=1}^n\sum_{i=1}^n\langle x_i,x_j\rangle^2 \\
= & \sum_{r=1}^k\sum_{s=1}^k\sum_{i=1}^8\sum_{j=1}^8 c_{r,s}^2 \lambda_{r,i}\lambda_{s,j} - 8\sum_{r=1}^k\sum_{s=1}^k\sum_{i=1}^8 \sum_{j=1}^8 c_{r,s}^2 \lambda_{r,i} \lambda_{s,j} (A_{r,s})_{ij}^2 \\
=: & f(\dots, \lambda_{r,i}, \dots),
\end{align*}
where $f$ is a function of each $\lambda_{r,i}$.

Since 
$$\frac{\partial f}{\partial \lambda_{r,i}} = 2 \sum_{s=1}^k\sum_{j=1}^8 c_{r,s}^2\lambda_{s,j}  - 16\sum_{s=1}^k \sum_{j=1}^8 c_{r,s}^2 \lambda_{s,j} (A_{r,s})_{ij}^2,$$    
summing over $i$ yields 
\begin{equation}\label{eq: sum f}
\begin{aligned}
\sum_{i=1}^8 \frac{\partial f}{\partial \lambda_{r,i}} &= 16 \sum_{s=1}^k\sum_{j=1}^8 c_{r,s}^2\lambda_{s,j}  - 16\sum_{s=1}^k \sum_{j=1}^8 c_{r,s}^2 \lambda_{s,j}\sum_{i=1}^8 (A_{r,s})_{ij}^2 \\
&= 16 \sum_{s=1}^k\sum_{j=1}^8 c_{r,s}^2\lambda_{s,j}  - 16\sum_{s=1}^k \sum_{j=1}^8 c_{r,s}^2 \lambda_{s,j} \\
& = 0,
\end{aligned}
\end{equation}
where in the second to last step we used the fact that $A_{r,s}$ is an orthogonal matrix.

We maximize $f$ subject to the constraint

\begin{center}
    $\displaystyle \sum_{r=1}^k\sum_{i=1}^8 \lambda_{r,i}^2= C$
\end{center}
for some constant $C \ge 0$. Using Lagrange multipliers, at the maximum of $f$ subject to the constraint, there exists some $\alpha \in \R$ such that for all $r,i$,
\begin{align}\label{eq: Lagrange}
\frac{\partial f}{\partial \lambda_{r,i}} = \alpha \lambda_{r,i}.    
\end{align}
Using (\ref{eq: sum f}) and summing over $i$ yields 
\begin{align*}
0 = \sum_{i=1}^8\frac{\partial f}{\partial \lambda_{r,i}} & = \alpha \sum_{i=1}^8 \lambda_{r,i}.
\end{align*}
Thus either $\alpha = 0$, or $\sum_{i=1}^8 \lambda_{r,i} = 0$ for all $r$.

If $\alpha = 0$, then by Equation (\ref{eq: Lagrange}), we have $\frac{\partial f}{\partial \lambda_{r,i}} = 0$ for all $r,i$.
Thus 
\begin{align*}
0 & = \sum_{r=1}^k\sum_{i=1}^8 \lambda_{r,i} \frac{\partial f}{\partial \lambda_{r,i}} \\
& = 2 \sum_{r=1}^k\sum_{i=1}^8\sum_{s=1}^k\sum_{j=1}^8 c_{r,s}^2\lambda_{r,i}\lambda_{s,j}  - 16\sum_{r=1}^k\sum_{i=1}^8\sum_{s=1}^k \sum_{j=1}^8 c_{r,s}^2 \lambda_{r,i}\lambda_{s,j} (A_{r,s})_{ij}^2 \\
& = 2f.
\end{align*}
If instead $\sum_{i=1}^8 \lambda_{r,i} = 0$ for all $r$, then 
\begin{align*}
f & =\sum_{r=1}^k\sum_{s=1}^k\sum_{i=1}^8\sum_{j=1}^8 c_{r,s}^2 \lambda_{r,i}\lambda_{s,j} - 8\sum_{r=1}^k\sum_{s=1}^k\sum_{i=1}^8 \sum_{j=1}^8 c_{r,s}^2 \lambda_{r,i} \lambda_{s,j} (A_{r,s})_{ij}^2 \\   
& =\sum_{r=1}^k\sum_{s=1}^k\sum_{j=1}^8 c_{r,s}^2 \Big(\sum_{i=1}^8\lambda_{r,i} \Big)\Big(\sum_{j=1}^8\lambda_{s,j}\Big) - 8\sum_{r=1}^k\sum_{s=1}^k\sum_{i=1}^8 \sum_{j=1}^8 c_{r,s}^2 \lambda_{r,i} \lambda_{s,j} (A_{r,s})_{ij}^2 \\ 
 & = - 8\sum_{r=1}^k\sum_{s=1}^k\sum_{i=1}^8 \sum_{j=1}^8 c_{r,s}^2 \lambda_{r,i} \lambda_{s,j} (A_{r,s})_{ij}^2 \\ 
 & = -8  \sum_{j=1}^n\sum_{i=1}^n\langle x_i,x_j\rangle^2 \\
& \leq 0. 
\end{align*}
In either case, at the maximum of $f$ subject to $ \sum_{r=1}^k\sum_{i=1}^8 \lambda_{r,i}^2= C$, we have $f \le 0$. 

Since $C \ge 0$ is arbitrary, we have $f \le 0$ always, which gives the desired inequality.
\end{proof}

\begin{rem}
Notice that in the above proposition, if $n \le 8$, then the proof is significantly simpler: just use the trivial estimate $|\Proj_{L(x_j)} x_i| \le |x_i|$ and apply Proposition \ref{ineq: sum}.
\end{rem}

By Lemma \ref{GENERALO} and Proposition \ref{prop: octo ineq}, at each point $p \in \Sigma$, we have 
\begin{align*}
& \sum_{A=1}^{m} -\langle N_{E_{A}},J_{\Sigma}(N_{E_{A}})\rangle \\
=& {\lambda^2} \bigg(\sum_{j=1}^n\sum_{i=1}^n |e_j^1|^2|\Proj_{L(e_j^1)} e_i^1|^2  - 8\sum_{j=1}^n\sum_{i=1}^n\langle e^{1}_{j},e_i^1\rangle^2 - 6\sum_{j=1}^n\sum_{k=1}^d \langle e_j^1, \eta_k^1 \rangle^2  \bigg)  \\
\leq & - 6{\lambda^2}  \sum_{j=1}^n\sum_{k=1}^d \langle e_j^1, \eta_k^1 \rangle^2  \\
\leq & 0.
\end{align*}

Now recall $\Phi=(\phi_1,\phi_2):\Sigma\rightarrow \mathbb{O}P^{2}\times M$ is a stable compact minimal immersion. The stability of $\Phi$ requires
\begin{equation*}
    \int_\Sigma \sum_{A=1}^{m} -\langle N_{E_{A}},J_{\Sigma}(N_{E_{A}})\rangle \ge 0.
\end{equation*} 

Combining the above two inequalites, we must have
$$0 = \sum_{A=1}^{m} -\langle N_{E_{A}},J_{\Sigma}(N_{E_{A}})\rangle =  - 6{\lambda^2}  \sum_{j=1}^n\sum_{k=1}^d \langle e_j^1, \eta_k^1 \rangle^2.$$ 
Thus for all $j,k$,
$$\langle e_j^1, \eta_k^1 \rangle = 0.$$
But since we have $0 = \langle   e_j, \eta_k \rangle = \langle e_j^1, \eta_k ^1 \rangle + \langle  e_j^2,  \eta_k^2 \rangle$, this shows that we necessarily have
\begin{equation*}
    \langle  e_j^2,  \eta_k^2 \rangle =  0
\end{equation*}
as well.\\

For the product manifold $\O P^{2} \times M$, let 
$$P: T(\O P^{2}\times M)  \to T \O P^{2} \oplus 0 \subseteq T(\O P^{2}\times M),$$
$$Q: T(\O P^{2}\times M)\to  0 \oplus TM \subseteq T(\O P^{2}\times M)$$
denote the projection maps. Let $F = P-Q$, then $F^2 = I$. For $p \in \Sigma$, we claim that 
$F( d\Phi(T_{p}{\Sigma})) \subseteq d\Phi(T_{p}{\Sigma})$.
In fact, since $F(e_j) = F(e_j^1, e_j^2) =  (e_j^1, -e_j^2)$, for each $\eta_k$, we have
$$
 \langle  F(e_j), \eta_k\rangle   = \langle  e_j^1, \eta_k^1\rangle -  \langle  e_j^2, \eta_k^2\rangle = 0.
$$
Therefore, $F(e_j) \perp N_p \Sigma$. This shows $F(e_j) \subset d\Phi(T_p\Sigma)$, so $F( d\Phi(T_{p}{\Sigma})) \subseteq d\Phi(T_{p}{\Sigma})$. Thus $\Sigma$ is an invariant submanifold of $\O P^{2} \times M$.

Analogously, as in the quaternionic case, by Theorem 1 in \cite{xu2000submanifolds}, $\Sigma$ is isometric to a product manifold $\Sigma_1 \times \Sigma_2$, where $\Sigma_1$ is an immersion into $\O P^{2}$ and $\Sigma_2$ is an immersion into $M$. Since $\Phi:\Sigma \to \O P^{2} \times M$ is a stable compact minimal immersion, we further have that $\Sigma_1$ and $\Sigma_2$ are stable compact minimal immersions into $\O P^{2}$ and $M$, respectively. However, Ohnita \cite[Theorem~E]{ohnita1986stable} showed that the only nontrivial stable minimal immersed submanifolds of $\O P^2$ are precisely the octonion protective lines
$\O P^1 \cong S^8$. Using this result, we have thus established Theorem \ref{main3}.

\bibliographystyle{alpha}
\bibliography{main}

\end{document}